\def\titlerunning#1{\gdef\titrun{#1}}
\def\author#1{\gdef\autrun{\def\and{\unskip, }#1}\gdef\@author{#1}}
\def\address#1{{\def\and{\\\hspace*{18pt}}\renewcommand{\thefootnote}{}%
\footnote {#1}}%
\markboth{\autrun}{\titrun}}
\def\email#1{e-mail: #1}
\def\subjclass#1{{\renewcommand{\thefootnote}{}%
\footnote{\emph{Mathematics Subject Classification (2010):} #1}}}
\def\keywords#1{\par\medskip
\noindent\textbf{Keywords.} #1}
\newtheorem{thm}[equation]{Theorem}
\newtheorem{cor}[equation]{Corollary}
\newtheorem{lem}[equation]{Lemma}
\newtheorem{prop}[equation]{Proposition}
\newtheorem{conj}[equation]{Conjecture}
\theoremstyle{definition}
\newtheorem{defn}[equation]{Definition}
\newtheorem{rmk}[equation]{Remark}
\numberwithin{equation}{section}
\newcommand{\BC}{{\mathbb {C}}}
\newcommand{\BZ}{{\mathbb {Z}}}
\newcommand{\bJ}{{\mathbf {J}}}
\newcommand{\bK}{{\mathbf {K}}}
\newcommand{\bU}{{\mathbf {U}}}
\newcommand{\CC}{{\mathcal {C}}}
\newcommand{\CJ}{{\mathcal {J}}}
\newcommand{\CK}{{\mathcal {K}}}
\newcommand{\CM}{{\mathcal {M}}}
\newcommand{\CN}{{\mathcal {N}}}
\newcommand{\CU}{{\mathcal {U}}}
\newcommand{\CW}{{\mathcal {W}}}
\newcommand{\CZ}{{\mathcal {Z}}}
\newcommand{\FA}{{\mathfrak {A}}}
\newcommand{\FB}{{\mathfrak {B}}}
\newcommand{\FF}{{\mathfrak {F}}}
\newcommand{\FP}{{\mathfrak {P}}}
\newcommand{\Fk}{{\mathfrak {k}}}
\newcommand{\Fo}{{\mathfrak {o}}}
\newcommand{\Fp}{{\mathfrak {p}}}
\newcommand{\RB}{{\mathrm {B}}}
\newcommand{\RG}{{\mathrm {G}}}
\newcommand{\RI}{{\mathrm {I}}}
\newcommand{\RL}{{\mathrm {L}}}
\newcommand{\RP}{{\mathrm {P}}}
\newcommand{\RQ}{{\mathrm {Q}}}
\newcommand{\RU}{{\mathrm {U}}}
\newcommand{\RZ}{{\mathrm {Z}}}
\newcommand{\Aut}{{\mathrm{Aut}}}
\newcommand{\End}{{\mathrm{End}}}
\newcommand{\GL}{{\mathrm{GL}}}
\newcommand{\Hom}{{\mathrm{Hom}}}
\newcommand{\Ind}{{\mathrm{Ind}}}
\newcommand{\cInd}{{\text{\rm c-Ind}}}
\newcommand{\Ker}{{\mathrm{Ker}}}
\newcommand{\Mat}{{\mathrm{Mat}}}
\renewcommand{\Re}{{\mathrm{Re}}}
\newcommand{\Supp}{{\mathrm{Supp}}}
\def\bks{{\backslash}}
\def\tilpi{{\widetilde{\pi}}}
\def\tiltau{{\widetilde{\tau}}}
\def\le{\leqslant}
\def\ge{\geqslant}
\begin{document}


\baselineskip=17pt


\titlerunning{The Local Converse Problem for $\GL_n$}

\title{Towards the Jacquet Conjecture on the Local Converse Problem for $p$-adic $\GL_n$}

\author{Dihua Jiang
\and 
Chufeng Nien
\and 
Shaun Stevens}

\date{}

\maketitle

\address{D. Jiang: School of Mathematics,
University of Minnesota,
Minneapolis, MN 55455, USA; \email{dhjiang@math.umn.edu}
\and
C. Nien: Department of Mathematics, National Cheng Kung University and
National Center for Theoretical Sciences (South), Tainan 701, Taiwan; \email{nienpig@mail.ncku.edu.tw}
\and
S. Stevens: School of Mathematics,
University of East Anglia,
Norwich Research Park,
Norwich, NR4 7TJ, UK; \email{Shaun.Stevens@uea.ac.uk}}

\subjclass{Primary 11F70, 22E50; Secondary 11F85, 22E55}


\begin{abstract}
The Local Converse Problem is to determine how the family of the 
local gamma factors~$\gamma(s,\pi\times\tau,\psi)$ characterizes the 
isomorphism class of an irreducible admissible generic representation~$\pi$ 
of~$\GL_n(F)$, with~$F$ a non-archimedean local field, where $\tau$ runs through 
all irreducible supercuspidal representations of $\GL_r(F)$ and $r$ runs
through positive integers. The Jacquet conjecture asserts that it is enough to 
take~$r=1,2,\ldots,\left[\frac{n}{2}\right]$. Based on arguments in the work of Henniart 
and of Chen giving preliminary steps towards the Jacquet conjecture,
we formulate a general approach to prove the Jacquet conjecture. With
this approach, the Jacquet conjecture is proved under an assumption
which is then verified in several cases, including the case of level
zero representations.

\keywords{Irreducible admissible representation, Whittaker model,
Local gamma factor, Local converse theorem}
\end{abstract}

\section{Introduction}

Let~$\pi$ be an irreducible (admissible) generic representation
of~$\RG_n:=\GL_n(F)$, where~$F$ is a 
locally compact non-archimedean local field. 
We may assume that~$n\ge 2$, since the discussion 
in this paper for $n=1$ is trivial. 
Attached to~$\pi$ is the family of local gamma 
factors~$\gamma(s,\pi\times\tau,\psi)$, with~$\tau$ any irreducible generic
admissible representations of any~$\RG_r$, in the sense of Jacquet,
Piatetski-Shapiro and Shalika (\cite{JPSS83}), which can also be defined 
through the Langlands-Shahidi method (\cite{Sh84}). 
Here~$\psi$ is a nontrivial additive character of~$F$;
the definition of this family of local gamma factors is
recalled in Section \ref{sec2}. It is natural to ask how 
this family of invariants yields
information about the representation~$\pi$.

In this paper, we consider the \emph{Local Converse Problem} for~$\RG_n$, 
which is to find the least integer~$n_0$
such that the family of local gamma factors~$\gamma(s,\pi\times\tau,\psi)$, 
with~$\tau$ running through all
irreducible generic representations of~$\RG_r$ for~$r=1,\ldots,n_0$,
determines the irreducible generic representation~$\pi$ of~$\RG_n$ up
to isomorphism. It is an easy consequence of the work of Jacquet,
Piatetski-Shapiro and Shalika (\cite{JPSS83}) that~$n_0\le n$. 
The work of Henniart~(\cite{H93}) shows that~$n_0\le n-1$, 
and the works of J.-P.~Chen~(\cite{Ch96} and~\cite{Ch06})
and Cogdell and Piatetski-Shapiro~(\cite{CPS99}) show that~$n_0\le n-2$, for~$n\ge 3$. 
A stronger statement (when~$n>4$)
is the following conjecture, which is usually attributed to H.~Jacquet. 

\begin{conj}[The Jacquet Conjecture on the Local Converse Problem]\label{conj1}
Let~$\pi_1$ and~$\pi_2$ be irreducible generic smooth representations
of~$\RG_n$. If their local gamma factors~$\gamma(s,\pi_1\times\tau,\psi)$
and~$\gamma(s,\pi_2\times\tau,\psi)$ are equal, as functions in the
complex variable~$s,$ for all irreducible generic
representations~$\tau$ of~$\RG_r,$ with~$r=1,\ldots,\left[\frac{n}{2}\right]$,
then~$\pi_1$ and~$\pi_2$ are equivalent as representations of~$\RG_n$.
\end{conj}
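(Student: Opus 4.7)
The plan is to recover each $\pi_i$ from its normalised partial Bessel function $j_{\pi_i,\psi}$ --- an intrinsic invariant defined on the regular part of $\RG_n$ which is known to characterise supercuspidal representations up to isomorphism --- and then to show that the hypothesis on gamma factors of rank $\le [n/2]$ forces $j_{\pi_1,\psi}=j_{\pi_2,\psi}$. First I would reduce to the case where both $\pi_1$ and $\pi_2$ are irreducible supercuspidal of rank $n$: this uses the multiplicativity of local gamma factors under parabolic induction (\cite{JPSS83}), together with the Bernstein--Zelevinsky classification of generic representations in terms of segments of supercuspidals and an induction on $n$, treating the Jacquet conjecture in smaller ranks as known hypotheses to match up the supercuspidal supports. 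At each step of the reduction the hypothesis is preserved.

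The heart of the argument, following Chen, is a Bruhat cell decomposition. Writing $\RG_n=\bigsqcup_w B w B$ over the Weyl group, one decomposes the Bessel function as $j_{\pi,\psi}=\sum_w j_{\pi,\psi,w}$, and a Rankin--Selberg integral computation together with the local functional equation shows that $\gamma(s,\pi\times\tau,\psi)$ for $\tau$ an irreducible supercuspidal of rank $r$ is essentially a pairing of $j_{\pi,\psi}$, restricted to those cells whose Weyl elements are compatible with the standard parabolic of type $(n-r,r)$, against the Whittaker function of $\tau$. Varying $\tau$ over irreducible supercuspidals of rank $r$ then pins down the corresponding cell contributions. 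Combining the hypothesis for $r=1,\ldots,[n/2]$ with the symmetry of Bessel cells under the long Weyl element --- which matches the Bruhat cells attached to ranks $r$ and $n-r$ --- is intended to give access to every Bruhat cell that supports $j_{\pi,\psi}$.

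The missing ingredient, which the paper isolates as its central assumption, is that these gamma-factor pairings can be disentangled cell by cell: equivalently, that certain partial Bessel integrals on smaller cells vanish, or match between $\pi_1$ and $\pi_2$, to the order required to extract the top cell. This is essentially a germ-type expansion for $j_{\pi,\psi}$ near non-regular unipotent classes. For level zero representations the assumption is verifiable directly: $\pi$ is compactly induced from a cuspidal type inflated from an irreducible cuspidal representation $\ovl\pi$ of $\GL_n$ over the residue field, and the desired identity reduces, via Mackey theory and a truncation to Howe vectors, to the analogous local converse statement over a finite field, which is already known.

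The principal obstacle is the verification of the assumption outside the tame range. On dimension grounds the bound $r\le[n/2]$ is just enough to reach every Bruhat cell, but the gamma-factor pairings are \emph{a priori} non-diagonal in $w$, and decoupling them requires sharp asymptotic control over Whittaker functions on non-generic unipotent orbits that is not currently available for wildly ramified supercuspidals. Any unconditional proof of Conjecture \ref{conj1} would have to produce either a germ expansion of $j_{\pi,\psi}$ valid in full generality, or an alternative route --- perhaps through automorphic descent or a refined theory of types --- that bypasses the cell-by-cell extraction altogether.
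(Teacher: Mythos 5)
Since Conjecture~\ref{conj1} is open, the paper does not prove it: it reduces the generic case to the supercuspidal case (Conjecture~\ref{conj3}), proves a conditional result (Theorem~\ref{thm:sp}) under the hypothesis that a \emph{special pair} of Whittaker functions exists, and then verifies that hypothesis when the two representations share an endo-class (Proposition~\ref{prop:endo-equivalent}, Corollary~\ref{cor:nonmax}). Your reduction to supercuspidals, via multiplicativity of gamma factors and the Bernstein--Zelevinsky classification, matches the paper's in spirit. Beyond that, however, the decomposition you propose, the assumption you ascribe to the paper, and the mechanism you sketch for the level-zero case all differ from what the paper actually does.

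The paper never attempts a Weyl-cell-by-cell extraction from $\bigsqcup_w \RB_n w\RB_n$. It works with Chen's much coarser decomposition $\RG_n=\bigsqcup_{i=0}^{n-1}\RU_n\alpha^i\RQ_n$ by ``height'' and with Nien's non-disjoint covering $\RG_n=\bigcup\RU_n\alpha^r\RQ_n\alpha^k\RU_n$, $0\le r\le\left[\frac n2\right]$, $n-\left[\frac n2\right]\le k\le n$. The whole design is built so that one \emph{never} has to diagonalize the Rankin--Selberg pairing in $w$: twists of rank $r\le\left[\frac n2\right]$ give agreement of the Whittaker functions at heights $0,\ldots,\left[\frac n2\right]$ directly (Proposition~\ref{htr}), and the inversion symmetry $W(g^{-1})=\ol{W(g)}$ of a $\bK$-special Whittaker function (Definition~\ref{def:sp}), together with the support condition, transports that agreement across the rest of the covering. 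Accordingly the paper's central hypothesis is \emph{not} a germ-type expansion of a partial Bessel function, nor a vanishing or decoupling statement for small-cell contributions; it is the purely existence-type Definition~\ref{sw}: one must exhibit Whittaker functions for $\pi_1$ and $\pi_2$ which are $\bK$-special for a common $\bK$ and which agree on the mirabolic $\RP_n$. Given that, the ``disentangling'' you describe as the missing ingredient is precisely the content of the proof of Theorem~\ref{thm:sp}, not an additional assumption. Finally, the level-zero case is handled not by a Mackey-theoretic reduction to the finite-field converse theorem via Howe vectors, but by observing that level-zero supercuspidals automatically share the degree-one endo-class, so Proposition~\ref{prop:endo-equivalent} supplies a special pair from the explicit Pa\v{s}k\={u}nas--Stevens Bessel functions attached to Bushnell--Kutzko types, and Theorem~\ref{thm:sp} then applies. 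Your general instincts --- reduce to supercuspidal, exploit a Bruhat-type decomposition and a Bessel-function symmetry, expect level zero to be accessible --- are sound, but on these three specific points your account of the paper's argument is not the one the paper gives.
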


It is clear that the work (\cite{H93,Ch96,CPS99,Ch06}) confirms that 
for~$2\le n\le 4$, Conjecture~\ref{conj1} is a theorem. Indeed, 
for~$n=2$, Conjecture~\ref{conj1} was proved in 1970 by Jacquet and Langlands 
in their well-known book (\cite{JL70}), and for~$n=3$ it was proved in 1979 
by Jacquet, Piatetski-Shapiro and Shalika (\cite{JPSS79}). 
Following a standard argument, 
which was already known to the experts in the 1980s,
we deduce in Section~\ref{sec2.3} that Conjecture~\ref{conj1} is
equivalent to the following conjecture. 

\begin{conj}\label{conj3}
Assume that~$\pi_1$ and~$\pi_2$ are irreducible unitarizable supercuspidal
representations of~$\RG_{n}$. If their 
local gamma factors~$\gamma(s,\pi_1\times\tau,\psi)$
and~$\gamma(s,\pi_2\times\tau,\psi)$ are equal as functions in the
complex variable~$s$, for all irreducible supercuspidal representations~$\tau$ of~$\RG_r$
with~$r=1,\ldots,\left[\frac{n}{2}\right]$, then~$\pi_1$ and~$\pi_2$ are
equivalent as representations of~$\RG_n$.
\end{conj}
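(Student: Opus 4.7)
The plan is to prove Conjecture~\ref{conj3} by studying the \emph{partial Bessel functions} attached to the supercuspidal representations $\pi_1$ and $\pi_2$ and exploiting the equality of twisted gamma factors for $r\le \left[\frac{n}{2}\right]$.

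First, I would recall that an irreducible generic representation $\pi$ of $\RG_n$ is completely determined by its Whittaker model $\CW(\pi,\psi)$, and hence by the family of \emph{partial Bessel functions} $\CB_{\pi,w,\varphi}$ indexed by Weyl elements $w$ in the Weyl group $W_n$ of $\RG_n$ and by a test function $\varphi$ on a suitable open compact subgroup. These Bessel functions are obtained from Whittaker functions by integrating out a unipotent subgroup against $\psi$, and their collection $\{\CB_{\pi,w,\varphi}\}_{w\in W_n}$ captures the isomorphism class of $\pi$. The strategy is to show that the hypothesis of matching gamma factors forces $\CB_{\pi_1,w,\varphi}=\CB_{\pi_2,w,\varphi}$ for every $w$.

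Second, using the Jacquet--Piatetski-Shapiro--Shalika local functional equation, the identity $\gamma(s,\pi_1\times\tau,\psi)=\gamma(s,\pi_2\times\tau,\psi)$ translates, after Mellin inversion, into an integral identity of the form
\[
\int \bigl(\CB_{\pi_1,w,\varphi}-\CB_{\pi_2,w,\varphi}\bigr)(g)\,W_\tau(g)\,\ud g=0,
\]
where $W_\tau$ ranges over Whittaker functions of supercuspidal $\tau$ of $\RG_r$. The rank $r$ of the twist controls which Weyl elements $w$ can be probed: twists by $\RG_r$ with $r\le \left[\frac{n}{2}\right]$ directly detect Bessel functions attached to Weyl elements whose associated parabolic has a block of size at most $\left[\frac{n}{2}\right]$ on one side. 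Combining this with the symmetry $w\mapsto w_0 w^{-1} w_0$ induced by passing to the contragredient $\widetilde\pi$ (whose gamma factors are determined by those of $\pi$ via the $\psi\mapsto\psi^{-1}$ switch), one obtains access to essentially all Weyl elements.

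Third, I would run a descending induction on the Bruhat order, starting from the longest Weyl element $w_0$. The base case is handled by the leading coefficients of the gamma factors, which encode the central character and the conductor, both of which are forced to agree. At each inductive step, combining the integral identities above with the inductive hypothesis that $\CB_{\pi_1,w',\varphi}=\CB_{\pi_2,w',\varphi}$ for all $w'$ larger than $w$ in the Bruhat order reduces the problem at $w$ to an identity supported on a lower-dimensional Bruhat cell, which one hopes to kill by a Fourier-inversion argument.

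The main obstacle, and the reason the paper works under an assumption rather than unconditionally, lies with Weyl elements lying near the middle of the Bruhat order, where neither the direct twists by $\RG_r$ with $r\le\left[\frac{n}{2}\right]$ nor their contragredient-dual twists reach transparently. Here one needs a sufficiently strong germ expansion, or equivalently a non-vanishing/stability property, for the Bessel function of $\pi$ at specific ``intermediate'' Weyl elements; I would formulate this as the required assumption and then verify it in the level zero case. For level zero $\pi$, the Bessel function descends to a Bessel function on $\GL_n(\Bf_q)$ via reduction modulo the maximal ideal, and the desired germ expansion can be established by reducing to the finite-field local converse theorem of Nien together with explicit computations of Bessel functions for cuspidal Deligne--Lusztig representations, making the main obstacle tractable in that case.
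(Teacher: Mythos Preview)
Your proposal diverges from the paper's approach in two essential ways, and the first of these is a genuine gap.

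\textbf{The symmetry you invoke is tautological.} You propose to reach the Weyl elements not directly probed by twists of rank $r\le\left[\frac{n}{2}\right]$ via the contragredient symmetry $w\mapsto w_0 w^{-1} w_0$. But the functional equation already gives $\gamma(s,\widetilde\pi\times\widetilde\tau,\psi^{-1})$ in terms of $\gamma(s,\pi\times\tau,\psi)$, so the hypothesis $\gamma(s,\pi_1\times\tau,\psi)=\gamma(s,\pi_2\times\tau,\psi)$ for $r\le\left[\frac{n}{2}\right]$ is \emph{equivalent} to the corresponding statement for the contragredients; it carries no new information. The paper's key insight is a different, non-tautological symmetry: one constructs \emph{specific} Whittaker functions $W_{\pi_i}$ (the Bessel functions of Pa\v{s}k\=unas--Stevens, built from maximal simple types) which are supported on $\RU_n\bK$ for a compact-mod-centre subgroup $\bK$ and satisfy $W_{\pi_i}(g^{-1})=\overline{W_{\pi_i}(g)}$ for $g\in\bK$. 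This is a property of a particular vector in the Whittaker model, not of the representation, and it genuinely relates values of $W_{\pi_i}$ on different Bruhat-type cells.

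\textbf{The decomposition and the assumption are different.} The paper does not use the full Bruhat decomposition indexed by $W_n$ with descending induction. It uses Chen's height decomposition $\RG_n=\bigsqcup_{i=0}^{n-1}\RU_n\alpha^i\RQ_n$ together with Nien's refinement $\RG_n=\bigcup_{r\le[n/2],\,k\ge n-[n/2]}\RU_n\alpha^r\RQ_n\alpha^k\RU_n$. The gamma-factor hypothesis and Chen's Proposition give agreement on heights $0,\ldots,\left[\frac{n}{2}\right]$; the $g\mapsto g^{-1}$ symmetry then transports this to $\RQ_n\alpha^k\RU_n$ for $k\ge n-\left[\frac{n}{2}\right]$; a further application of the functional equation finishes the remaining cells. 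The required assumption is not a germ expansion or stability statement but the existence of a \emph{special pair} $(W_{\pi_1},W_{\pi_2})$: $\bK$-special Whittaker functions agreeing on $\RP_n$. This is verified when $\pi_1,\pi_2$ share an endo-class, using that the restriction of the Pa\v{s}k\=unas--Stevens Bessel function to $\RP_n$ depends only on the simple character. The level zero case falls out as the special case of trivial endo-class, rather than by a separate reduction to Nien's finite-field theorem.
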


Since any irreducible supercuspidal representation of~$\RG_n$
has a nontrivial Whittaker model, it is natural to use this property, combined
with the local functional equation of the local Rankin--Selberg convolution
for~$\RG_n\times\RG_r$, to figure out a possible approach to prove
Conjecture~\ref{conj3}. This is in fact the idea behind the previous
attacks on the Local Converse Problem (\cite{JPSS79,H93,Ch96,Ch06}).
In this paper, we add a new idea to the argument in order to attempt
to reduce the twists 
down to~$r=1,\ldots,\left[\frac{n}{2}\right]$, i.e. Conjecture~\ref{conj3}. The
idea is to find Whittaker functions satisfying some special properties.

Let~$\RU_n$ be the unipotent radical of the standard Borel subgroup~$\RB_n$
of~$\RG_n$, which consists of all upper-triangular matrices.
Denote by~$\RP_n$ the mirabolic subgroup of~$\RG_n$, consisting of
matrices with last row equal to~$(0,\ldots,0,1)$. We also fix a
standard non-degenerate character~$\psi_n$ of~$\RU_n$ (see
Section~\ref{sec2.1}) so that all Whittaker functions are
implicitly~$\psi_n$-Whittaker functions.

\begin{defn}\label{def:sp}
Let~$\pi$ be an irreducible unitarizable supercuspidal representations
of~$\RG_n$ and let~$\bK$ be a compact-mod-centre open subgroup of~$\RG_n$.
A (non-zero) Whittaker function~$W_{\pi}$ for~$\pi$ is called
\emph{$\bK$-special} if it satisfies:
\[
W_{\pi_i}(g^{-1})=\overline{W_{\pi_i}(g)}\text{ for all }g\in \bK ,
\]
and~$\Supp W_{\pi}\subset \RU_n\bK~$, where~$\overline{\phantom{a}}$ denotes
complex conjugation.
\end{defn}

\begin{defn}\label{sw}
Let~$\pi_1$ and~$\pi_2$ be irreducible unitarizable supercuspidal
representations of~$\RG_n$ with the same central
character. Let~$W_{\pi_1}$ and~$W_{\pi_2}$ be 
(non-zero) Whittaker functions
for~$\pi_1$ and~$\pi_2$, respectively. We call~$(W_{\pi_1} ,
W_{\pi_2})$ a \emph{special pair} (of Whittaker functions)
for~$(\pi_1,\pi_2)$ if there exists a compact-mod-centre open
subgroup~$\bK$ of~$\RG_n $ such that~$W_{\pi_1}$ and~$W_{\pi_2}$ are
both~$\bK$-special and
\[
W_{\pi_1}(p)=W_{\pi_2}(p),\text{ for all }p\in \RP_n.
\]
\end{defn}

If a special pair of Whittaker functions as in Definition~\ref{sw} exists
for~$(\pi_1,\pi_2)$, we can prove that the representations $\pi_1$ and
$\pi_2$ are distinguished by their families of 
local gamma factors~$\gamma(s,\pi_i\times\tau,\psi)$, for~$\tau$ irreducible
supercuspidal representations of~$\RG_r,$
with~$r=1,\ldots,\left[\frac{n}{2}\right]$, by using a refinement of the argument
in~\cite{Ch96} and~\cite{Ch06}. This approach was successfully carried
out by the second-named author in~\cite{N12} for general linear groups over
finite fields. The key point is to find a refined decomposition
for~$\RG_n$ which reflects the symmetry carried in Definition~\ref{def:sp}.
We recall in Section~\ref{sec3.1} this refined decomposition. 

\begin{thm}\label{thm:sp}
Let~$\pi_1$ and~$\pi_2$ be irreducible unitarizable supercuspidal
representations of~$\RG_n$. Assume that a special
pair~$(W_{\pi_1},W_{\pi_2})$ exists for~$(\pi_1,\pi_2)$. 
If the local gamma factors~$\gamma(s,\pi_1\times\tau,\psi)$
and~$\gamma(s,\pi_2\times\tau,\psi)$ are equal as functions in the
complex variable~$s$, for all irreducible
supercuspidal representations~$\tau$ of~$\RG_r$
with~$r=1,\ldots,\left[\frac{n}{2}\right]$, then~$W_{\pi_1}= W_{\pi_2}$
and~$\pi_1$ and~$\pi_2$ are equivalent as representations of~$\RG_n$.
\end{thm}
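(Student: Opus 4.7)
\medskip

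\noindent\textbf{Proof proposal.}
The plan is to reduce the equivalence $\pi_1\simeq\pi_2$ to the pointwise identity $W_{\pi_1}=W_{\pi_2}$, and to deduce the latter by combining the functional equation for Rankin--Selberg $\GL_n\times\GL_r$ integrals (with $r\leq[n/2]$) with the symmetry built into Definition~\ref{def:sp}. First, the special pair hypothesis gives $W_{\pi_1}(p)=W_{\pi_2}(p)$ for $p\in\RP_n$. Since $\pi_1$ and $\pi_2$ are supercuspidal with the same central character, agreement on $\RP_n$ automatically extends to agreement on $\RZ_n\RP_n$, the latter being of finite index in $\RG_n$. What remains is to propagate equality across the remaining double cosets of the refined decomposition of $\RG_n$ recalled in Section~\ref{sec3.1}.

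Next, for each $r\in\{1,\dots,[n/2]\}$ and each irreducible supercuspidal~$\tau$ of~$\RG_r$ with Whittaker function~$W_\tau$, the equality of gamma factors, inserted into the Jacquet--Piatetski-Shapiro--Shalika local functional equation, yields
\[
\Psi(s,W_{\pi_1},W_\tau)-\Psi(s,W_{\pi_2},W_\tau)
\;=\;\gamma(s,\pi_i\times\tau,\psi)^{-1}\bigl(\wt\Psi(1-s,W_{\pi_1},W_\tau)-\wt\Psi(1-s,W_{\pi_2},W_\tau)\bigr),
\]
so the difference $W:=W_{\pi_1}-W_{\pi_2}$ satisfies the vanishing of both sides for every such~$\tau$. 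The integral $\Psi$ tests~$W$ against matrices of the form $\mathrm{diag}(p,1_{n-r})$ with $p\in\RG_r$, while the dual integral $\wt\Psi$ tests~$W$ against matrices involving the lower-right $\GL_{n-r}$ block. The Bernstein--Zelevinsky theory of derivatives (or Chen's argument in \cite{Ch06}) converts the vanishing of $\Psi(s,W,W_\tau)$ for all supercuspidal $\tau$ into pointwise vanishing of appropriate partial Bessel / restricted Whittaker functions on the $r$-th block, and similarly $\wt\Psi$ controls the $(n-r)$-th block on the transposed/inverted side.

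The central new input is then to use the $\bK$-special condition $W_{\pi_i}(g^{-1})=\overline{W_{\pi_i}(g)}$ on~$\bK$, together with $\Supp W_{\pi_i}\subset\RU_n\bK$, to transport the information extracted from~$\wt\Psi$ (which naturally lives on the $(n-r)$-side) back to the $r$-side where it can be compared with the information from~$\Psi$. Concretely, a matrix lying in the $\alpha$-double coset of the refined decomposition has its inverse (up to a unipotent and an element of~$\bK$) in the ``complementary'' double coset indexed by $r\mapsto n-r$; the conjugation-by-inverse, combined with complex conjugation and the fact that both $W_{\pi_i}$ are supported in $\RU_n\bK$, identifies the contribution of $W$ on the $r$-side with that on the $(n-r)$-side. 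Thus the twists by supercuspidals of $\RG_r$ with $r\leq[n/2]$ effectively deliver the information one would normally extract from twists by $\RG_r$ with $r\leq n-1$. An inductive sweep over the double cosets in the refined decomposition, starting from $\RZ_n\RP_n$ where $W$ already vanishes, then forces $W\equiv 0$ on all of $\RG_n$, i.e.\ $W_{\pi_1}=W_{\pi_2}$. Finally, since a supercuspidal representation is determined by any nonzero Whittaker function in its Whittaker model (via the Kirillov model and $\RG_n=\RZ_n\RP_n\cdot\bK'$ for a suitable open compact $\bK'$), the identity $W_{\pi_1}=W_{\pi_2}$ yields $\pi_1\simeq\pi_2$.

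The main obstacle will be Step three: setting up the refined decomposition so that the inverse-symmetry of Definition~\ref{def:sp} exactly pairs the $r$-side contribution of $\Psi$ with the $(n-r)$-side contribution of $\wt\Psi$, and then verifying that no ``middle'' double coset (arising when $n$ is even and $r=n/2$) is left uncontrolled. The bookkeeping here is delicate because the inverse map does not respect $\RU_n$ left-cosets on the nose, and one must absorb the correction using both the $\RU_n$-covariance of Whittaker functions and the $\bK$-support condition; this is precisely the step where Nien's finite-field argument in \cite{N12} has to be carried over to the $p$-adic setting.
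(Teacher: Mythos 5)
Your proposal captures the broad skeleton of the paper's argument, but it has a false step and, more importantly, a genuine gap exactly where you flag ``the main obstacle.'' The false step is minor: $\RZ_n\RP_n=\RQ_n$ is open but \emph{not} of finite index in~$\RG_n$ (it is a proper parabolic; $\RQ_n\backslash\RG_n\cong\BP^{n-1}(F)$ is infinite). Agreement of $W_{\pi_1}$ and $W_{\pi_2}$ on~$\RQ_n$ is still a useful starting point, but it comes from the central-character equality (which itself must be deduced from the gamma factor hypothesis via stability, Corollary~\ref{char0}), not from finiteness of index.

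The real gap is in Step three. Your plan is to use the $\bK$-special symmetry $W(g^{-1})=\overline{W(g)}$ to transport information extracted from~$\wt\Psi$ back to the $r$-side, and you sketch this as sending a double coset at index~$r$ to the complementary one at~$n-r$. That works to pass from the heights $\RU_n\alpha^r\RQ_n$ with $0\le r\le[\frac n2]$ (which Proposition~\ref{htr}, i.e.\ Chen's argument, already gives) to the cosets $\RQ_n\alpha^k\RU_n$ with $n-[\frac n2]\le k\le n$; this is the paper's Step~2. But the remaining double cosets $\RU_n\alpha^r\RQ_n\alpha^k\RU_n$ with $1\le r\le[\frac n2]$ and $n-[\frac n2]\le k\le n-1$ are \emph{stable} under inversion: since $\alpha^n=1$, inverting sends the $(r,k)$-coset to the $(n-k,n-r)$-coset, which lies in the same ``middle'' range. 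So the $\bK$-special symmetry alone cannot discharge these cosets, and the ``inductive sweep'' you invoke has no induction variable that decreases. The missing idea is to apply the Jacquet--Piatetski-Shapiro--Shalika functional equation not to the original Whittaker functions but to the right translates $R_{p\alpha^k u}W_{\pi_i}$: after Step~2 these translates agree on~$\RP_n$, so the zeta integrals with $j=n-r-1$ agree; the gamma-factor hypothesis then equates the dual-side integrals; and the completeness Lemma~\ref{Jac} (for all generic $\tau$ of~$\RG_r$, $r\le[\frac n2]$) converts this into pointwise equality, which after unwinding with $w_{n,r}$ and the specific choice $g=w_r$ gives agreement on $\RU_n\alpha^r\RQ_n\alpha^k\RU_n$. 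Without this right-translation device, the functional equation only re-derives what you already have from Step~1, and the middle cosets remain uncontrolled. The final reduction (from $W_{\pi_1}=W_{\pi_2}$ to $\pi_1\simeq\pi_2$) is fine but overcomplicated: uniqueness of Whittaker models already shows $\CW(\pi_1,\psi_n)\cap\CW(\pi_2,\psi_n)=0$ unless $\pi_1\simeq\pi_2$; no Kirillov-model or Iwasawa decomposition input is needed.
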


In certain cases, one can prove the existence of special pairs for
irreducible unitarizable supercuspidal representations of~$\RG_n$ by
using the construction of supercuspidal representations in terms of
maximal simple types of Bushnell and Kutzko (\cite{BK93}) and the
explicit construction of Bessel functions of supercuspidal
representations due to Pa{\v{s}}k{\=u}nas and the third-named author (\cite{PS08}). Given an
irreducible supercuspidal representation~$\pi$ of~$\RG_n$, one of the
invariants associated to it, by Bushnell and Henniart in~\cite{BH96},
is its \emph{endo-class}~$\Theta(\pi)$. We prove:

\begin{prop}\label{prop:endo-equivalent}
Let~$\pi_1$,~$\pi_2$ be irreducible unitarizable supercuspidal representations
of~$\RG_n$ with the same endo-class. Then there is a special
pair~$(W_{\pi_1},W_{\pi_2})$ for~$(\pi_1,\pi_2)$.
\end{prop}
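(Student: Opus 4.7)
The plan is to use the Bushnell--Kutzko classification of supercuspidal representations of $\GL_n(F)$ by extended maximal simple types to reduce $\pi_1$ and $\pi_2$ to a common type-theoretic shape, and then to take the special pair to be the canonical Bessel functions constructed by Pa\v{s}k\=unas and the third-named author.

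First I would write $\pi_i\simeq\cInd_{\bJ_i}^{\RG_n}\Lambda_i$ for an extended maximal simple type $(\bJ_i,\Lambda_i)$ with $\Lambda_i=\kappa_i\otimes\sigma_i$: here $\kappa_i$ is a $\beta$-extension of the Heisenberg representation attached to a simple character $\theta_i$ representing the endo-class $\Theta(\pi_i)$, and $\sigma_i$ is the inflation to $\bJ_i$ of an irreducible cuspidal representation of the finite quotient $\bJ_i/J_i^1\simeq\GL_m(\Bk)$ for a residue-field extension $\Bk/\Bk_F$. Since $\Theta(\pi_1)=\Theta(\pi_2)$, the simple characters $\theta_1,\theta_2$ are endo-equivalent, so after conjugating $\pi_2$ inside $\RG_n$ one may arrange $\theta_1=\theta_2=\theta$, and therefore $\bJ_1=\bJ_2=:\bJ$, $J_1^1=J_2^1=:J^1$, and $\kappa_1=\kappa_2=:\kappa$. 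Only the cuspidal parts $\sigma_1,\sigma_2$ then differ, together with an unramified twist imposed by matching the central characters.

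Next I would set $\bK=\bJ$ and take $W_{\pi_i}$ to be the Bessel function $\CB_{\pi_i}$. By its construction, $\CB_{\pi_i}$ is the unique Whittaker function for $\pi_i$ supported on $\RU_n\bJ$ and normalized by $\CB_{\pi_i}(1)=1$; its restriction to $\bJ$ is given by an explicit formula in terms of the shared $\beta$-extension $\kappa$ and the finite-field Bessel function $\CB_{\sigma_i}$ of the cuspidal $\sigma_i$. The support condition in Definition \ref{def:sp} is then automatic, and the involutive identity $\CB_{\pi_i}(g^{-1})=\overline{\CB_{\pi_i}(g)}$ on $\bJ$ follows from unitarizability of $\pi_i$ together with the standard involutive identity for the Bessel function of a unitary cuspidal $\sigma_i$ of $\GL_m(\Bk)$. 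Hence both $\CB_{\pi_1}$ and $\CB_{\pi_2}$ are $\bJ$-special.

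It remains to verify that $\CB_{\pi_1}(p)=\CB_{\pi_2}(p)$ for every $p\in\RP_n$; this is the main obstacle. Using the $(\RU_n,\psi_n)$-equivariance and the common support $\RU_n\bJ$, the identity reduces to comparing the values on a transversal for $(\RU_n\cap\bJ)\bs(\RP_n\cap\bJ)$, whose image in $\bJ/J^1\simeq\GL_m(\Bk)$ must be controlled. The hard point is to choose the maximal simple type, within its intertwining class, so that this image lies in a subset of $\GL_m(\Bk)$ on which the Bessel functions of any two unitary cuspidal representations agree---for instance because the cuspidality of $\sigma_i$ forces vanishing on the unipotent radical of a mirabolic of $\GL_m(\Bk)$, or because the finite-field Bessel function is determined on the relevant Bruhat cells only by $\psi$ and the Weyl data, independently of $\sigma_i$ (this is the $p$-adic analogue of the finite-field input used in \cite{N12}). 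Once this transversality is in place, the common data $(\theta,\kappa)$ forces the two Bessel functions to coincide on $\RP_n$, producing the desired special pair.
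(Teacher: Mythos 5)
Your proposal follows the same overall strategy as the paper (conjugate so the simple characters coincide, take the explicit PS08 Bessel--Whittaker functions, check the three conditions of Definition~\ref{sw}), and it correctly identifies the crux---agreement on~$\RP_n$. However, you leave a genuine gap exactly there, and you also skip a necessary normalization step.

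On the main gap: you say ``this is the main obstacle'' and then offer two speculative mechanisms (choose the type so that the transversal lands in a good subset of the finite group; the finite-field Bessel function is determined on the relevant cells by Weyl data alone) without actually closing the argument. The paper's resolution is sharper and cleaner than either of your suggestions: the formula~\eqref{eqn:WsponPn}, which is a consequence of Proposition~\ref{bs30}(iii) from~\cite{PS08}, says that for~$g\in\RP_n$ one has~$W_\pi(g)=\Psi_n(g)$ if~$g\in(J\cap\RU_n)H^1$ and~$W_\pi(g)=0$ otherwise. In other words, the restriction of the Bessel--Whittaker function to~$\RP_n$ is supported on~$(J\cap\RU_n)H^1$ and there it equals~$\Psi_n$, so it depends \emph{only} on the simple character~$\theta$ (via~$\Psi_n$), and not at all on~$\kappa$ or on the cuspidal~$\sigma_i$. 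Once~$\theta_1=\theta_2$ and the same~$\psi_n$ is in play, agreement on~$\RP_n$ is immediate with no case analysis over Bruhat cells needed. Your first suggested mechanism (cuspidality forcing vanishing on a mirabolic) is heuristically the same phenomenon, but you neither cite the PS08 result nor derive it, so the proposal as written does not establish the equality on~$\RP_n$. Relatedly, your step ``$\kappa_1=\kappa_2=:\kappa$'' is both unjustified (two~$\beta$-extensions of the same~$\eta$ can differ by a character of~$J/J^1$; one can absorb this into~$\sigma_i$, but that needs saying) and, in the paper's argument, unnecessary: nothing about~$\kappa$ enters once~\eqref{eqn:WsponPn} is in hand.

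A second, smaller but real, omission: you need the \emph{same} non-degenerate pair~$(\RU_n,\psi_n)$ to be compatible with both~$\Lambda_1$ and~$\Lambda_2$ after conjugation, i.e.\ $\Hom_{\RU_n\cap\bJ}(\psi_n,\Lambda_i)\ne0$ for both~$i$. This is not automatic, and the paper handles it by invoking~\cite[Remark~4.15]{PS08} to take~$(\RU_n,\psi_n)$ from the canonical construction of~\cite[Theorem~3.3]{PS08}, which depends only on~$\theta_1$, and then~\cite[Corollary~4.13]{PS08} to conclude compatibility with every extended maximal simple type containing~$\theta_1$, in particular with~$\Lambda_2$. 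Your proposal moves directly from~$\theta_1=\theta_2$ to taking both Bessel functions without addressing this point. (Also minor: you write~$\bJ/J^1\simeq\GL_m(\Bk)$; it is~$J/J^1$ that is isomorphic to the finite general linear group, while~$\bJ/J$ is infinite cyclic.)
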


Theorem~\ref{thm:sp} with
Proposition~\ref{prop:endo-equivalent} implies, for example, that two
level zero irreducible unitarizable supercuspidal representations~$\pi_1,\pi_2$
of~$\RG_n$ can be distinguished by the set of 
local gamma factors~$\gamma(s,\pi_i\times\tau,\psi)$, for all irreducible
supercuspidal representations~$\tau$ of~$\RG_r$
with~$r=1,2,\ldots,\left[\frac{n}{2}\right]$. In fact, this is a special case of
a more general result, as follows.

Attached to an irreducible supercuspidal representation~$\pi$ of~$\RG_n$, via
its endo-class~$\Theta(\pi)$, is an invariant which we call
its~\emph{degree}~$\deg(\pi)$. The degree is an integer dividing~$n$:
for example,~$\deg(\pi)=1$ if and only if~$\pi$ is a twist of a level
zero representation; 
and if~$\deg(\pi)<n$ then~$\pi$ is invariant under a non-trivial
unramified character twist, though the converse is not true.
By using formulae on the conductors of pairs of
supercuspidal representations from~\cite{BHK98,BH03}, we 
immediately obtain the following corollary.

\begin{cor}\label{cor:nonmax}
Let~$\pi_1$ and~$\pi_2$ be irreducible unitarizable supercuspidal
representations of~$\RG_n$ and suppose that~$\deg(\pi_1)<n$. If the
local gamma factors~$\gamma(s,\pi_1\times\tau,\psi)$
and~$\gamma(s,\pi_2\times\tau,\psi)$ are equal as functions in the
complex variable~$s$, for all irreducible supercuspidal
representations~$\tau$ of~$\RG_r$ with~$r=1,\ldots,\left[\frac{n}{2}\right]$,
then~$\pi_1$ and~$\pi_2$ are equivalent as representations of~$\RG_n$.
\end{cor}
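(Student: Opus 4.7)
The plan is to reduce the statement to Theorem~\ref{thm:sp} via Proposition~\ref{prop:endo-equivalent}: once I establish that $\pi_1$ and $\pi_2$ have the same endo-class, Proposition~\ref{prop:endo-equivalent} supplies a special pair $(W_{\pi_1},W_{\pi_2})$ and Theorem~\ref{thm:sp} finishes. The task therefore reduces to extracting the equality $\Theta(\pi_1)=\Theta(\pi_2)$ from coincidence of the local gamma factors $\gamma(s,\pi_i\times\tau,\psi)$ for supercuspidal $\tau$ of $\RG_r$ with $r\le[n/2]$.

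First I would convert the hypothesis on gamma factors into a statement about Rankin--Selberg conductors. Since $\pi_1,\pi_2$ and $\tau$ are all supercuspidal and $\deg(\tau)\le[n/2]<n$, the Rankin--Selberg $L$-factors $L(s,\pi_i\times\tau)$ are trivial, so $\gamma(s,\pi_i\times\tau,\psi)$ is, up to a unit, a power of $q^{-s}$ whose exponent is a linear function of the conductor $a(\pi_i\times\tau)$. Equal gamma factors thus give $a(\pi_1\times\tau)=a(\pi_2\times\tau)$ for every supercuspidal $\tau$ of $\RG_r$, $r\le[n/2]$.

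The main step is to show that this family of conductors determines $\Theta(\pi_i)$. Here I would invoke the conductor formulas of Bushnell--Henniart--Kutzko~\cite{BHK98} and Bushnell--Henniart~\cite{BH03}, which express $a(\pi\times\tau)$ as an explicit arithmetic function of $\Theta(\pi)$, $\Theta(\tau)$, their degrees, and their common refinement. The hypothesis $\deg(\pi_1)<n$, combined with $\deg(\pi_1)\mid n$, forces $\deg(\pi_1)\le n/2$; consequently there exist supercuspidals $\tau$ of some $\RG_r$ with $r\le[n/2]$ whose endo-class coincides with $\Theta(\pi_1)$---this being the critical input that fails in the excluded case $\deg(\pi_1)=n$. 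Scrutinising the conductor formulas as $\Theta(\tau)$ ranges over endo-classes of degree dividing $[n/2]$, one reads off $\Theta(\pi_i)$ from the family $\bigl(a(\pi_i\times\tau)\bigr)_\tau$ and concludes $\Theta(\pi_1)=\Theta(\pi_2)$.

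The main obstacle will be the last extraction step---verifying that, in the range $\deg(\tau)\le[n/2]$, the collection $\{a(\pi_i\times\tau)\}$ is a complete invariant of $\Theta(\pi_i)$. This amounts to a combinatorial analysis of the formulas in~\cite{BHK98,BH03}, tracking how the degrees, the slopes, and the approximation indices of the two endo-classes interact; the hypothesis on $\deg(\pi_1)$ is used precisely to ensure that every refinement of $\Theta(\pi_i)$ can be probed by a twist $\tau$ of rank at most $[n/2]$.
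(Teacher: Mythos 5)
Your overall strategy is the same as the paper's: reduce to $\Theta(\pi_1)=\Theta(\pi_2)$ via Rankin--Selberg conductors (equal gamma factors give equal conductors since the $L$-factors are trivial), choose a supercuspidal $\tau$ with $\deg(\tau)=\deg(\pi_1)\le[n/2]$ and $\Theta(\tau)=\Theta(\pi_1)$, and then invoke Proposition~\ref{prop:endo-equivalent} and Theorem~\ref{thm:sp}. That skeleton is correct and you have correctly identified why the hypothesis $\deg(\pi_1)<n$ is needed.

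The gap is in the step you yourself flag as ``the main obstacle.'' You propose to let $\Theta(\tau)$ range over all endo-classes of degree up to $[n/2]$ and to show, by an unspecified ``combinatorial analysis of the formulas in \cite{BHK98,BH03},'' that the family of conductors is a complete invariant of $\Theta(\pi_i)$. This is both more than is needed and is precisely the part you do not actually establish. The paper does not need the family: a single well-chosen $\tau$ suffices, and the key input is \cite[Theorem~C]{BH03}, namely that the symmetric pairing $\FF$ on endo-classes satisfies
\[
\FF(\Theta_1,\Theta_2)\ge\FF(\Theta_1,\Theta_1),
\]
with equality if and only if $\Theta_1=\Theta_2$. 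Taking $\tau$ of degree $r=\deg(\pi_1)$ with $\Theta(\tau)=\Theta_1$, the conductor formula
\[
f(\pi_i\times\tilde\tau,\psi)=nr\bigl(\FF(\Theta(\pi_i),\Theta_1)+1\bigr)
\]
together with equality of gamma factors immediately gives $\FF(\Theta_2,\Theta_1)=\FF(\Theta_1,\Theta_1)$, hence $\Theta_1=\Theta_2$ by the characterization, with no combinatorial analysis required. Without knowing this characterization result your argument stops short of a proof; with it, the ``scrutiny of the formulas'' collapses to one line. You should cite and use \cite[Theorem~C]{BH03} directly rather than leaving the extraction step open.
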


We end the paper with some discussion of the scope of the methods used
here, in particular of the obstacles to extending to the
case~$\deg(\pi_1)=n$ (see Remark~\ref{rmk:end}).

\section{Basics in the local Rankin--Selberg convolution}\label{sec2}

We start by recalling the basic facts about Whittaker models of
irreducible generic representations of~$\RG_n$ and
local gamma factors of Rankin--Selberg convolution type over the 
locally compact non-archimedean local field~$F$. 
We denote by~$\Fo_F$ the ring of integers in~$F$, by~$\Fp_F$ the prime
ideal in~$\Fo_F$, and by~$\Fk_F$ the residue field of~$F$, 
of cardinality~$q$ and characteristic~$p$; we also
write~$|\cdot|$ for the absolute value on~$F$, normalized to have
image~$q^{\BZ}$. 
We use analogous notation for extensions
of~$F$. We also fix, once and for all, an additive character~$\psi$
of~$F$ which is trivial on~$\Fp_F$ but nontrivial on~$\Fo_F$.

\subsection{Whittaker models}\label{sec2.1}

Let~$\RQ_n$ be the standard parabolic subgroup of~$\RG_n$ corresponding to the
partition~$(n-1,1).$ Then
\[
\RQ_n=\RZ_n\RP_n,
\]
where~$\RZ_n$ is the center of~$\RG_n$, and~$\RP_n$ is the mirabolic subgroup.

\begin{defn}
A character~$\psi_{\RU_n}$ of~$\RU_n$ is called \emph{non-degenerate} if its
normalizer in~$\RB_n$ is~$\RZ_n\RU_n$. We denote by~$\psi_n$ the
\emph{standard} non-degenerate character given by
\[
\psi_{n}(u)=\psi\left(\sum_{i=1}^{n-1}u_{i,i+1}\right),
\]
for~$u=(u_{i,j})\in \RU_n$.
\end{defn}

We call an irreducible smooth representation~$(\pi,V_\pi)$ of $\RG_n$ 
\emph{generic} if there is 
a non-degenerate character~$\psi_{\RU_n}$ of~$\RU_n$
such that the~$\Hom$-space
\[
\Hom_{\RG_n}(V_\pi,\Ind_{\RU_n}^{\RG_n}(\psi_{\RU_n}))
\]
is nonzero. By the uniqueness of local Whittaker models,
this~$\Hom$-space is at most one-dimensional. 
Since the non-degenerate characters of~$\RU_n$ are all conjugate
under~$\RB_n$, we see that~$\pi$ is generic if and only if
\[
\Hom_{\RG_n}(V_\pi,\Ind_{\RU_n}^{\RG_n}(\psi_n))
\cong \Hom_{\RU_n}(V_\pi|_{\RU_n}, \psi_n)
\]
is non-zero, where the isomorphism comes from Frobenius reciprocity.

Assume that~$\pi$ is generic. We fix a nonzero functional~$\ell$
in~$\Hom_{\RU_n}(V_\pi|_{\RU_n}, \psi_n)$ (which is unique up to scalar).
The \emph{Whittaker function} attached to a vector~$v\in V_\pi$ is defined by
\[
W_v(g):=\ell(\pi(g)(v)), \text{ for all } g\in \RG_n.
\]
It is easy to see that~$W_v$ belongs to~$\Ind_{\RU_n}^{\RG_n}(\psi_n)$ and
\[
\CW(\pi,\psi_n):=\{W_v \mid v\in V_{\pi}\}
\]
is called the~\emph{$\psi_n$-Whittaker model} of~$\pi,$ or simply the Whittaker
model of~$\pi$. It is clear that the Whittaker model of $\pi$ is 
independent of the choice of the nonzero functional~$\ell$.

For any~$W_v\in \CW(\pi, \psi_n)$, define
\[
\widetilde {W_v}(g):=W_v(w_n\cdot{^tg^{-1}}),
\]
for~$g\in \RG_n$, where~$w_n$ is the longest Weyl group element of~$\RG_n$,
with~$1$'s on the second diagonal and zeros elsewhere, and~${}^tg$ denotes the
transpose of~$g$. Then one can check that the function~$\widetilde
{W_v}$ belongs to the~$\psi_n^{-1}$-Whittaker model of the
contragredient~$\tilpi$ of~$\pi$, that is,
\[
\widetilde {W_v}\in \CW(\tilde \pi, \psi_n^{-1})\subset
\Ind_{\RU_n}^{\RG_n}(\psi_n^{-1}).
\]
It is a basic fact that any irreducible supercuspidal representation
of~$\RG_n$ is generic (\cite[Theorem~B]{GeK75}). We recall the following
properties of the restriction of an irreducible generic
representation~$(\pi,V_\pi)$ of~$\RG_n$ to the subgroup~$\RP_n$, which can be viewed as the 
starting point of our approach to prove the Jacquet conjecture for $\RG_n$. 

\begin{thm}[{\cite[\S5]{BeZ76}}]
With the notation fixed as above, the following hold.
\begin{enumerate}
\item $\Ind_{\RU_n}^{\RP_n}(\psi_n)$ is irreducible as a representation of~$\RP_n$.
\item If~$\pi$ is a generic representation of~$\RG_n$,
then~$\Ind_{\RU_n}^{\RP_n }(\psi_n)$ is a~$\RP_n$-sub\-representation
of~$\CW(\pi,\psi_n)|_{\RP_n}$.
\item If~$\pi$ is an irreducible supercuspidal representation of~$\RG_n,$
then~$\pi|_{\RP_n}$ is equivalent to~$\Ind_{\RU_n}^{\RP_n }(\psi_n)$ as
representations of~$\RP_n$.
\end{enumerate}
\end{thm}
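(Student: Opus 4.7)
Since this statement is the classical Bernstein--Zelevinsky (BZ) theorem on the restriction to the mirabolic subgroup, I would organize the proof around their four derivative functors $\Psi^\pm,\Phi^\pm$ relating smooth representations of $\RG_{n-1}$, $\RP_{n-1}$, and $\RP_n$: the ``minus'' functors are twisted Jacquet functors along the last-row unipotent radical of $\RP_n$ (with trivial and $\psi$-twisted characters, respectively), and $\Psi^+,\Phi^+$ are their normalized compact-induction adjoints. A routine unwinding of definitions gives the identification
\[
\Ind_{\RU_n}^{\RP_n}(\psi_n)\;\cong\;(\Phi^+)^{n-1}(\mathbf{1}),
\]
where $\mathbf{1}$ is the trivial representation of $\RP_1=\{e\}$, so each of the three assertions becomes a statement about this iterated functor.

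For~(1), I would induct on $n$ (the case $n=1$ being trivial). The inductive step rests on the standard BZ fact that $\Phi^+$ sends nonzero irreducibles to nonzero irreducibles, which follows from exactness of $\Phi^+$, the identity $\Phi^-\Phi^+=\Id$, the vanishing $\Psi^-\Phi^+=0$, and the joint faithfulness of $\Phi^-$ and $\Psi^-$ on smooth $\RP_n$-representations: any proper subobject of $\Phi^+\sigma$ is annihilated by both $\Phi^-$ and $\Psi^-$, hence is zero.

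For~(2), any Whittaker function $W\in\CW(\pi,\psi_n)$ with $W(e)\neq 0$ spans, inside $\CW(\pi,\psi_n)|_{\RU_n}$, a copy of the character $\psi_n$, because $W(ug)=\psi_n(u)W(g)$. Frobenius reciprocity converts this inclusion of $\RU_n$-modules into a nonzero $\RP_n$-morphism
\[
\Ind_{\RU_n}^{\RP_n}(\psi_n)\;\longrightarrow\;\CW(\pi,\psi_n)|_{\RP_n},
\]
which~(1) forces to be injective, thereby embedding $\Ind_{\RU_n}^{\RP_n}(\psi_n)$ as a $\RP_n$-subrepresentation.

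For~(3), the additional input is the BZ filtration of $\pi|_{\RP_n}$,
\[
0=\sigma_n\subset\sigma_{n-1}\subset\cdots\subset\sigma_0=\pi|_{\RP_n},\qquad \sigma_{k-1}/\sigma_k\;\cong\;(\Phi^+)^{k-1}\Psi^+(\pi^{(k)}),
\]
where $\pi^{(k)}:=\Psi^-(\Phi^-)^{k-1}\pi$ is the $k$-th BZ derivative, a smooth representation of $\RG_{n-k}$. For supercuspidal $\pi$, Jacquet's theorem on the vanishing of Jacquet modules along proper parabolics of $\RG_n$ kills $\pi^{(k)}$ for $1\le k\le n-1$, so only the bottom layer $\sigma_{n-1}=(\Phi^+)^{n-1}\Psi^+(\pi^{(n)})$ survives; since $\pi^{(n)}$ is a one-dimensional representation of the trivial group $\RG_0$, this reduces to $(\Phi^+)^{n-1}(\mathbf{1})=\Ind_{\RU_n}^{\RP_n}(\psi_n)$. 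The principal technical obstacle is constructing the BZ filtration and verifying the bundle of formal properties (exactness, adjointness, joint faithfulness, and the identities among $\Phi^\pm,\Psi^\pm$); once that machinery is in place, (1)--(3) follow almost formally.
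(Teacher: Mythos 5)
The paper does not prove this result; it is quoted directly from Bernstein--Zelevinsky, so there is no proof in the paper to compare against. Your approach via the derivative functors $\Psi^\pm,\Phi^\pm$ and the identification $\cInd_{\RU_n}^{\RP_n}(\psi_n)\cong(\Phi^+)^{n-1}(\mathbf{1})$ is exactly the route taken in the cited source, and your treatments of~(1) and~(3) are sound (for~(1) your phrasing ``any proper subobject of $\Phi^+\sigma$ is annihilated by both $\Phi^-$ and $\Psi^-$'' compresses a two-step argument: one first applies $\Phi^-,\Psi^-$ to the quotient $\Phi^+\sigma/W$ to force $\Phi^-(W)=\sigma$ to be impossible, and only then to $W$ itself; as literally stated the claim is not obvious, but the standard argument is what you intend). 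You should also make explicit that the $\Ind$ in the statement must be read as compact induction $\cInd$, which is what $(\Phi^+)^{n-1}(\mathbf{1})$ produces; with full smooth induction part~(1) is actually false, since the compactly supported functions form a proper nonzero subrepresentation.

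The genuine gap is in your proof of~(2). You invoke Frobenius reciprocity to convert a nonzero element of $\Hom_{\RU_n}(\psi_n,\CW(\pi,\psi_n)|_{\RU_n})$ into a nonzero $\RP_n$-morphism $\cInd_{\RU_n}^{\RP_n}(\psi_n)\to\CW(\pi,\psi_n)|_{\RP_n}$. This is the adjunction ``$\cInd$ is left adjoint to restriction,'' which holds only for induction from an \emph{open} subgroup; but $\RU_n$ is closed and of strictly smaller dimension in $\RP_n$, so no such adjunction is available. (The adjunction that does hold, $\Hom_{\RP_n}(V,\Ind_{\RU_n}^{\RP_n}\psi_n)\cong\Hom_{\RU_n}(V|_{\RU_n},\psi_n)$ for smooth full induction, points in the wrong direction and involves the wrong functor.) The fix is to deduce~(2) the same way you deduce~(3): the bottom step $\sigma_{n-1}=(\Phi^+)^{n-1}\Psi^+(\pi^{(n)})$ of the BZ filtration is a genuine $\RP_n$-subrepresentation of $\pi|_{\RP_n}$; for $\pi$ irreducible generic, $\pi^{(n)}$ is one-dimensional by uniqueness of Whittaker models, so $\sigma_{n-1}\cong(\Phi^+)^{n-1}(\mathbf{1})\cong\cInd_{\RU_n}^{\RP_n}(\psi_n)$, and transporting this across the isomorphism $\pi\cong\CW(\pi,\psi_n)$ gives the desired embedding.
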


\subsection{Local gamma factors}\label{sec2.2}

Next we review the basic setting of local gamma factors
attached to a pair of irreducible generic representations, for details
of which we refer to~\cite{JPSS83}.

Let~$n,r\ge 1$ be integers and let~$\pi$ and~$\tau$ be irreducible
generic representations of~$\RG_n$  and~$\RG_r$, respectively, with
central characters~$\omega_\pi$ and~$\omega_\tau$
respectively. Let~$W_\pi\in \CW(\pi,\psi_n)$ be a Whittaker function
of~$\pi$ and~$W_\tau\in \CW(\tau,\psi_r^{-1})$ be a Whittaker function
of~$\tau$. Since it is the only case of interest to us here, we
suppose that~$n>r$.

If~$j$ is an integer for which~$n-r-1\ge j\ge 0$, a local zeta integral for the
pair~$(\pi,\tau)$ is defined by
\[
\CZ(W_\pi, W_\tau,   s; j):=
\int_{g}\int_{x} W_\pi
\begin{pmatrix}g&0&0\\
x&\RI_j&0\\
0&0&\RI_{n-r-j}\end{pmatrix}
W_\tau(g)|\det g|^{s-\frac{n-r}{2}} dx dg,
\]
where the integration in the variable~$g$ is over~$\RU_r\backslash\RG_r$ and the
integration in the variable~$x$ is over $\Mat_{j\times r}(F)$.
Jacquet, Piatetski-Shapiro, and Shalika proved in~\cite{JPSS83} the
following theorem.

For~$g\in\RG_n$, we denote by~$R_g$ the right translation action
of~$g$ on
functions from~$\RG_n$ to~$\BC$, and we put~$w_{n,r}=
\begin{pmatrix}
\RI_{r}& 0\\
0&w_{n-r} \end{pmatrix}$.

\begin{thm}[{\cite[Section 2.7]{JPSS83}}]\label{FE}
With notation as above, the following hold.
\begin{enumerate}
\item Each integral~$\CZ(W_\pi, W_\tau, \Phi,  s; j)$ is absolutely convergent
for~$\Re(s)$ sufficiently large and is a rational function of~$q^{-s}$. More precisely,
for fixed~$j$, the integrals~$\CZ(W_\pi, W_\tau, s;j)$ span a fractional ideal
(independent of~$j$)
\[
\BC[q^s,q^{-s}]L(s,\pi\times\tau)
\]
of the ring~$\BC[q^s,q^{-s}]$, where the local~$L$-factor~$L(s, \pi\times\tau)$ has the
form~$P(q^s)^{-1}$, with~$P\in \BC[x]$ and~$P(0)=1$.
\item  For~$n-r-1\ge j\ge 0$, there is a factor~$\epsilon(s,\pi\times \tau, \psi)$
independent of~$j$, such that
\begin{eqnarray*}
&&\frac{\CZ(R_{w_{n,r}}\widetilde{W}_\pi,\widetilde{W}_\tau,  1-s; n-r-j-1)}
{L(1-s, \tilpi\times\tiltau)} \\
&&\quad\hskip4cm =\ \omega_\tau(-1)^{n-1}\epsilon( s, \pi\times \tau, \psi)
\frac{\CZ(W_\pi, W_\tau, s;j)}{L(s,\pi\times\tau)}.
\end{eqnarray*}
\item There are~$c\in\BC^\times$ and~$f=f(\pi\times\tau,\psi)\in\BZ$ such that
\[
\epsilon( s, \pi\times \tau, \psi)=cq^{-fs}.
\]
\end{enumerate}
\end{thm}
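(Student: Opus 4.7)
The plan is to follow the approach of Jacquet, Piatetski-Shapiro and Shalika, combining asymptotics of Whittaker functions on the diagonal torus with the uniqueness of Whittaker models. Parts (1)--(3) then emerge from a single package once a one-dimensionality statement about certain bilinear forms is in place.

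For part (1), I would first establish absolute convergence for $\Re(s)$ large. Applying the Iwasawa decomposition $\RG_r=\RU_r\RA_r\RK_r$, one reduces to an integral over the split torus $\RA_r$. Whittaker functions satisfy well-known finiteness on $\RA_r$: for $a=\diag(a_1,\ldots,a_r)$, both $W_\pi(a)$ and $W_\tau(a)$ are supported on the cone where each $|a_i/a_{i+1}|$ is bounded below, with rapid decay as those ratios grow. Combined with $|\det a|^{\Re(s)-(n-r)/2}$, this yields convergence in a right half-plane. For rationality and the fractional ideal structure, I would observe that the span $\CI(s)$ of the integrals $\CZ(W_\pi,W_\tau,s;j)$ is a $\BC[q^s,q^{-s}]$-submodule of $\BC(q^{-s})$, finitely generated via the Bernstein--Zelevinsky filtration of $\pi|_{\RP_n}$. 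Since $\BC[q^s,q^{-s}]$ is a PID, $\CI(s)$ is a principal fractional ideal, with canonical generator $L(s,\pi\times\tau)=P(q^s)^{-1}$ normalized by $P(0)=1$.

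For part (2), I would define two bilinear forms on $\CW(\pi,\psi_n)\times\CW(\tau,\psi_r^{-1})$ by
\[
B_s := \frac{\CZ(W_\pi,W_\tau,s;j)}{L(s,\pi\times\tau)},\qquad
\tilde B_s := \frac{\CZ(R_{w_{n,r}}\widetilde W_\pi,\widetilde W_\tau,1-s;n-r-j-1)}{L(1-s,\tilpi\times\tiltau)}.
\]
By construction both take values in $\BC[q^s,q^{-s}]$, with no common zero as $(W_\pi,W_\tau)$ varies. The key point is to show that at generic $s$, both $B_s$ and $\tilde B_s$ lie in a common one-dimensional space of bilinear forms satisfying an appropriate $(\RG_r,\psi)$-equivariance---morally, uniqueness of Whittaker models on $\RG_r$, transported to $\RG_n$ through the Bernstein--Zelevinsky description of $\pi|_{\RP_n}$. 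Then $\tilde B_s=\gamma(s,\pi\times\tau,\psi)B_s$ for some $\gamma\in\BC(q^{-s})$, and we set $\gamma=\omega_\tau(-1)^{n-1}\epsilon(s,\pi\times\tau,\psi)$. Independence of $j$ is a consequence of the same one-dimensionality, since different choices of $j$ correspond to right-translation of $W_\pi$ by matrices in $\RP_n$ and therefore yield proportional forms.

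For part (3), since each side of the functional equation can be made equal to any prescribed element of the corresponding fractional ideal, $\epsilon(s,\pi\times\tau,\psi)$ has neither zeros nor poles as a rational function of $q^{-s}$. Hence it is a unit in the Laurent polynomial ring $\BC[q^s,q^{-s}]$, necessarily of the form $cq^{-fs}$. The main obstacle I would anticipate is the one-dimensionality underlying the functional equation: although at root it is uniqueness of Whittaker models, the mirabolic parameter $j$ and the presence of the ambient $\RG_n$ force a careful Bernstein--Zelevinsky analysis of $\pi|_{\RP_n}$ in order to identify the equivariance satisfied by $B_s$ and $\tilde B_s$ as instances of a single irreducible datum.
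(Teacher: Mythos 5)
The paper does not prove this theorem: it is quoted, with attribution, from [JPSS83, Section~2.7], and no argument appears in the text. Your sketch is therefore a reconstruction of the Jacquet--Piatetski-Shapiro--Shalika proof rather than a variant of something in the present paper. The high-level structure you describe is the right one: Iwasawa decomposition together with torus asymptotics of Whittaker functions for convergence and rationality; the principal-fractional-ideal argument over the PID $\BC[q^s,q^{-s}]$ for the $L$-factor; generic one-dimensionality of a space of quasi-invariant bilinear forms for the functional equation; and absence of zeros and poles forcing the $\epsilon$-factor to be a unit of $\BC[q^s,q^{-s}]$, hence a monomial.

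Two details are stated incorrectly, however, and would break if the sketch were expanded into a full proof. First, the support and decay of Whittaker functions on the torus are reversed: with the standard character $\psi_n$, a Whittaker function $W(a)$ for $a=\diag(a_1,\ldots,a_r)$ vanishes unless each ratio $|a_i/a_{i+1}|$ is bounded \emph{above} (the standard smoothness argument gives $\psi(t\,a_i/a_{i+1})=1$ for all small $t$, forcing $|a_i/a_{i+1}|$ to be small), and the nontrivial asymptotic expansion occurs as these ratios tend to $0$, not as they grow. For supercuspidal $\pi$, the case relevant to the rest of the paper, the support is in fact compact modulo $\RZ_n\RU_n$ and convergence is immediate. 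Second, the independence of $j$ is not a consequence of ``right-translation of $W_\pi$ by matrices in $\RP_n$'': changing $j$ changes the domain of the inner $x$-integration, not merely the argument of $W_\pi$ on a fixed domain, so the bilinear forms for different $j$ a priori live on different quotients of $\pi|_{\RP_n}$. In JPSS, the independence of the spanned ideal and of $\epsilon$ from $j$ is a separate proposition with its own (Fourier-analytic) argument, not an instance of the one-dimensionality used for the functional equation. Neither issue undermines your overall plan, but both would need to be corrected before the sketch could be written out.
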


The local gamma factor
attached to a pair of representations~$\pi$ and~$\tau$ is defined
in~\cite{JPSS83} by
\begin{equation}\label{gammadef}
\gamma(s,\pi\times \tau, \psi)=\epsilon(s,\pi\times \tau, \psi)\frac{L(1-s,\tilpi\times\tiltau)}{L(s,\pi\times  \tau)}.
\end{equation}
Then the functional equation in Part~(ii) of Theorem~\ref{FE} can be rewritten
\begin{equation}\label{gam}
\CZ(R_{w_{n,r}}\widetilde{W}_\pi, \widetilde{W}_\tau, 1-s; n-r-j-1)
=\omega_\tau(-1)^{n-1}\gamma(s,\pi\times \tau, \psi)\CZ(W_\pi, W_\tau,  s,j).
\end{equation}
We also remark that the local gamma factor~$\gamma(s,\pi\times\tau,\psi)$
determines the conductor~$f(\pi\times\tau,\psi)$, since it is the leading power
of~$q^{-s}$ in a power series expansion for~$\gamma(s,\pi\times\tau,\psi)$.

\subsection{Central characters}

In this section, we show that the 
well-known result that local gamma factors determine the
central character. We begin by recalling the following result on the stability
of local gamma factors, which follows from~\cite[Proposition~2.7]{JS85}.

\begin{prop}\label{stab0}
Let~$\pi$ be an irreducible generic representation of~$\RG_n$ with $n\geq 2$. Then there
exits~$m_\pi$ such that, for any character~$\chi$ of~$F^\times$ of
conductor~$m\ge m_{\pi}$ and any~$c\in \Fp^{-m}$ satisfying~$\chi(1+x)=\psi(cx)$,
for~$x\in\Fp_F^{\left[\frac{m}{2}\right]+1}$, we have
\[
L(s,\pi\times\chi)=1
\text{ and }
\epsilon(s, \pi\times \chi, \psi)=\omega_{\pi}(c)^{-1} \epsilon(s, 1\times\chi, \psi)^n.
\]
\end{prop}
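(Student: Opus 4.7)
The plan is to follow the strategy of Jacquet--Shalika, namely to compute the Rankin--Selberg zeta integral $\CZ(W_\pi,\chi,s;j)$ explicitly for a sufficiently ramified character $\chi$ of $F^\times$, and then extract the $L$- and $\epsilon$-factors from the resulting formula via the functional equation of Theorem~\ref{FE}.

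First, I would choose a convenient Whittaker function $W_\pi\in\CW(\pi,\psi_n)$. Since $\pi|_{\RP_n}\cong\Ind_{\RU_n}^{\RP_n}(\psi_n)$ when $\pi$ is supercuspidal, and $\CW(\pi,\psi_n)|_{\RP_n}$ contains $\Ind_{\RU_n}^{\RP_n}(\psi_n)$ in general, one can find a nonzero $W_\pi$ whose restriction to $\RP_n$ is supported on $\RU_n\cdot\RK_0$ for an arbitrarily small compact open subgroup $\RK_0\subset\RP_n$, with $W_\pi(uk)=\psi_n(u)$ for $u\in\RU_n$, $k\in\RK_0$; extending by the central character $\omega_\pi$ gives a function on $\RZ_n\RP_n$. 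Taking $r=1$, $j=n-2$, the zeta integral becomes, after writing $\RG_1=F^\times$,
\[
\CZ(W_\pi,\chi,s;n-2)=\int_{F^\times}\int_{\Mat_{(n-2)\times 1}(F)} W_\pi\!\begin{pmatrix}a&0&0\\ x&\RI_{n-2}&0\\ 0&0&1\end{pmatrix}\chi(a)|a|^{s-\frac{n-1}{2}}\,dx\,d^\times\!a.
\]
Thanks to the support condition, for $m=\cond(\chi)$ sufficiently large only $a$ in a small neighbourhood of the center contributes, and $x$ is forced into $\RK_0$; after a change of variable one can factor the integrand and reduce to a product of one-dimensional integrals of the form $\int\chi(1+y)\psi(*y)\,dy$.

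Second, I would use the relation $\chi(1+x)=\psi(cx)$ to evaluate these one-dimensional integrals as Gauss sums in $c$. This shows that $\CZ(W_\pi,\chi,s;n-2)$ equals a nonzero monomial in $q^{-s}$ (times $\omega_\pi$ evaluated on a suitable central element), hence lies in $\BC[q^s,q^{-s}]$. Since the $\CZ(W_\pi,\chi,s;n-2)$ span the fractional ideal $\BC[q^s,q^{-s}]L(s,\pi\times\chi)$ and some choice gives a unit, we conclude $L(s,\pi\times\chi)=1$. The same argument applied to $\tilpi$ and $\chi^{-1}$ (which also has large conductor) yields $L(1-s,\tilpi\times\chi^{-1})=1$, so by \eqref{gammadef} we have $\gamma(s,\pi\times\chi,\psi)=\epsilon(s,\pi\times\chi,\psi)$.

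Third, I would apply the functional equation \eqref{gam} with $j=n-2$, so the right-hand side involves $\CZ(R_{w_{n,1}}\widetilde{W}_\pi,\widetilde{W}_\chi,1-s;0)$. A parallel explicit computation, this time on the ``big'' Bruhat cell and using that $w_{n,1}=\diag(1,w_{n-1})$, expresses the left-hand side as a product of $n$ Gauss sums, each of which matches the defining integral for $\epsilon(s,1\times\chi,\psi)$ (the abelian $\GL_1$ case). Comparing the two computations, all factors cancel except $\omega_\pi^{-1}$ evaluated at the central contribution, yielding $\epsilon(s,\pi\times\chi,\psi)=\omega_\pi(c)^{-1}\epsilon(s,1\times\chi,\psi)^n$, where the factor $\omega_\pi(c)^{-1}$ records how $\omega_\pi$ interacts with the character $\psi(c\cdot)$ used to define $\chi$ near $1$.

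The main obstacle is the combinatorial bookkeeping in the explicit integral computation: one must choose $m_\pi$ large enough (depending on the conductor of $\pi$ and the size of the support of $W_\pi$) to guarantee the reduction to Gauss sums, and then carefully track how the center $\RZ_n$, the diagonal torus, and the unipotent variable $x$ interact with the character $\chi(1+y)=\psi(cy)$ so that the Gauss sums on both sides of the functional equation align with $\epsilon(s,1\times\chi,\psi)^n$ and produce the central character factor $\omega_\pi(c)^{-1}$. Everything else (trivial $L$-factor, polynomial dependence on $q^{-s}$) is a formal consequence once the explicit formula is in hand.
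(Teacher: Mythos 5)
The paper does not give a direct proof of this proposition: it simply cites \cite[Proposition~2.7]{JS85} and observes that the precise statement is contained in the proof on page 323 of that paper. Your overall strategy --- computing the $\GL_n\times\GL_1$ Rankin--Selberg zeta integral explicitly for a highly ramified character $\chi$, reducing to Gauss sums via $\chi(1+y)=\psi(cy)$, and then reading off $L$ and $\epsilon$ from the functional equation --- is indeed the Jacquet--Shalika strategy, so your plan lines up with the proof behind the cited result.

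There is, however, a concrete gap in your Step~2. You fix a Whittaker function $W_\pi$ whose restriction to $\RP_n$ is supported on $\RU_n\RK_0$ with $W_\pi(uk)=\psi_n(u)$, where $\RK_0$ is an arbitrarily small compact open subgroup of $\RP_n$ of some fixed depth $N$. The support condition then forces $a$ into $1+\Fp_F^N$ and $x$ into the corresponding small lattice, and on that set $\psi_n(u)$ is constantly $1$, so the zeta integral reduces (up to a volume constant) to
\[
\int_{1+\Fp_F^N}\chi(a)\,d^\times a,
\]
which \emph{vanishes} as soon as the conductor $m$ of $\chi$ exceeds $N$ --- one sees this already for $n=2$, where there is no $x$-integral at all. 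Thus $\CZ(W_\pi,\chi,s;n-2)$ is not a nonzero monomial for $m$ large; both sides of the functional equation vanish, and you cannot solve for $\gamma$. (The conclusion $L(s,\pi\times\chi)=1$ happens to be true, but not for the reason you give.) The cure, which is exactly where the technical content of \cite{JS85} lives, is that the oscillation of the ramified character must be cancelled by an oscillation coming from the Whittaker side: the test data have to be chosen to depend on $m$, or equivalently one has to compute on the dual (big-cell) side of the functional equation, where the Whittaker function $R_{w_{n,1}}\widetilde W_\pi$ does carry the requisite oscillation and where $\omega_\pi(c)^{-1}$ actually emerges. Your Step~3 gestures at that computation but leaves the core bookkeeping --- matching $n$ Gauss sums to $\epsilon(s,1\times\chi,\psi)^n$ and extracting $\omega_\pi(c)^{-1}$ --- entirely implicit.
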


\begin{proof}
Although this is not quite the statement
of~\cite[Proposition~2.7]{JS85}, this statement is included in the
proof (see page 323 of \emph{op.\ cit.}).
\end{proof}

\begin{cor}\label{char0} Let~$\pi_1$,~$\pi_2$ be irreducible generic
representations of~$\RG_n$. If their local gamma factors~$\gamma(s,\pi_1\times\chi,\psi)$
and~$\gamma(s,\pi_2\times\chi,\psi)$ are equal as functions in the
complex variable~$s,$ for any character~$\chi$ of~$F^\times$,
then~$\omega_{\pi_1}=\omega_{\pi_2}.$
\end{cor}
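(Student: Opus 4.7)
The plan is to apply the stability property of local gamma factors (Proposition~\ref{stab0}) to characters $\chi$ of very large conductor in order to reduce $\gamma(s,\pi_i\times\chi,\psi)$ to an explicit expression in which only the central character $\omega_{\pi_i}$ depends on $\pi_i$. Equality of gamma factors will then force $\omega_{\pi_1}$ and $\omega_{\pi_2}$ to agree on many elements of $F^\times$, and a short multiplicative argument will extend this to all of $F^\times$.

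Concretely, I would set $M := \max(m_{\pi_1}, m_{\pi_2}, m_{\tilpi_1}, m_{\tilpi_2})$. For any character $\chi$ of $F^\times$ of conductor $m \ge M$ and any $c \in \Fp_F^{-m}$ with $\chi(1+x) = \psi(cx)$ for $x \in \Fp_F^{\left[\frac{m}{2}\right]+1}$, applying Proposition~\ref{stab0} to $\pi_i$ (with character $\chi$) and to $\tilpi_i$ (with character $\chi^{-1}$) gives $L(s,\pi_i\times\chi)=L(1-s,\tilpi_i\times\chi^{-1})=1$, so the definition~(\ref{gammadef}) yields
$$\gamma(s,\pi_i\times\chi,\psi)\;=\;\epsilon(s,\pi_i\times\chi,\psi)\;=\;\omega_{\pi_i}(c)^{-1}\,\epsilon(s,1\times\chi,\psi)^{n}.$$
Because $\epsilon(s,1\times\chi,\psi)^{n}$ is nowhere vanishing and independent of $i$, the hypothesis $\gamma(s,\pi_1\times\chi,\psi)=\gamma(s,\pi_2\times\chi,\psi)$ forces $\omega_{\pi_1}(c)=\omega_{\pi_2}(c)$.

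Next, every $c\in F^\times$ with $|c|=q^m$ for some $m\ge M$ arises this way: the prescription $\chi(1+x):=\psi(cx)$ defines a character of $1+\Fp_F^{\left[\frac{m}{2}\right]+1}$ which extends to a character of $F^\times$ of conductor~$m$. Hence $\omega_{\pi_1}$ and $\omega_{\pi_2}$ agree on $S:=\{c\in F^\times: |c|\ge q^M\}$. To promote this to all of $F^\times$, fix a uniformiser $\varpi$: for any $a\in F^\times$, I choose $m$ large enough that both $\varpi^{-m}$ and $a\varpi^{-m}$ lie in $S$, and then
$$\frac{\omega_{\pi_1}(a)}{\omega_{\pi_2}(a)}\;=\;\frac{\omega_{\pi_1}(a\varpi^{-m})\,\omega_{\pi_2}(\varpi^{-m})}{\omega_{\pi_2}(a\varpi^{-m})\,\omega_{\pi_1}(\varpi^{-m})}\;=\;1,$$
which gives $\omega_{\pi_1}=\omega_{\pi_2}$.

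There is no substantial obstacle here: all the real content is packed into Proposition~\ref{stab0}, after which the argument is purely formal. The one point requiring a little care is to apply the stability result simultaneously to $\pi_i$ and $\tilpi_i$, so that both $L$-factors in~(\ref{gammadef}) trivialise and equality of gamma factors reads off directly as equality of epsilon factors.
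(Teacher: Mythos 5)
Your proof is correct and follows essentially the same route as the paper's: take $m_0$ to be the maximum of the stability bounds for $\pi_1,\pi_2,\tilpi_1,\tilpi_2$, use Proposition~\ref{stab0} to trivialise both $L$-factors (so that $\gamma=\epsilon$) and to write $\epsilon(s,\pi_i\times\chi,\psi)=\omega_{\pi_i}(c)^{-1}\epsilon(s,1\times\chi,\psi)^n$, deduce $\omega_{\pi_1}(c)=\omega_{\pi_2}(c)$ for all $c$ of sufficiently large valuation, then extend to all of $F^\times$ by a quotient argument. The only cosmetic differences are that you spell out explicitly the application of stability to the contragredients with $\chi^{-1}$, and your final multiplicative step uses a fixed uniformiser where the paper phrases it as a general quotient of two elements of small valuation; both are the same argument.
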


\begin{proof}
For~$i=1,2$, let~$m_{\pi_i},m_{\tilpi_i}$ be the numbers given by 
Proposition~\ref{stab0}
and put~$m_0=\max\{m_{\pi_i},m_{\tilpi_i}\mid i=1,2\}$. For~$\chi$ a character
of~$F^\times$ of conductor~$m\ge m_0$, we
have~$\epsilon(s,\pi_i\times\chi,\psi)=\gamma(s,\pi_i\times\chi,\psi)$,
by~\eqref{gammadef} and 
Proposition~\ref{stab0}.

For any~$c\in\Fp^{-m}\setminus\Fp^{1-m}$, with~$m\ge m_0$,
there exists a character~$\chi_c$ character of conductor~$m$ such
that~$\chi_c(1+x)=\psi(cx)$, for~$x\in \Fp_F^{\left[\frac{m}{2}\right]+1}$; thus
Proposition~\ref{stab0} implies
\[
\omega_{\pi_1}(c)=\omega_{\pi_2}(c).
\]
Since any element of~$F^\times$ can be expressed as the quotient of
two elements of valuation at most~$-m$, we deduce
that~$\omega_{\pi_1}=\omega_{\pi_2}.$
\end{proof}

\subsection{Reduction from generic to supercuspidal}\label{sec2.3}

This section is devoted to reducing Conjecture~\ref{conj1} to
Conjecture~\ref{conj3}. In other words, 
if the Local Converse Theorem for twisting by generic representations
of rank up to~$\left[\frac{n}{2}\right]$ holds for unitarizable supercuspidal
representations, then it also holds for general generic smooth representations.

Let~$\pi$ be an irreducible generic smooth representation of~$\RG_n$. 
From the classification of irreducible smooth representations
of~$\RG_n$~%
\cite[Theorem~9.7]{Z80},~$\pi$ is the unique irreducible
generic subquotient of a standard parabolically induced representation
\[
\tau_1|\cdot|^{z_1}\times\cdots\times\tau_t|\cdot|^{z_t},
\]
where each~$\tau_i$ is an irreducible unitarizable supercuspidal representation
of~$\RG_{n_i}$, with~$n=\sum_{i=1}^t n_i$, and
\[
z_1\ge \cdots\ge z_t
\]
are real numbers. Moreover~$(\tau_1,\ldots,\tau_t)$ and~$(z_1,\ldots,z_t)$ are uniquely
determined up to a permutation~$\sigma$ such that~$z_{\sigma(i)}=z_i$, and any such
tuples give rise to an irreducible generic representation of~$\RG_n$ in this way. By
the multiplicativity of the 
local gamma factors (\cite[Theorem~3.1]{JPSS83}),we have
\begin{equation}\label{eqn:mult}
\gamma(s,\pi\times\tau,\psi) = \prod_{i=1}^t\gamma(s+z_i,\tau_i\times\tau,\psi),
\end{equation}
for all irreducible generic representations~$\tau$ of~$\RG_r$.
We also observe that there is at most one index~$i$ such that~$n_i>\left[\frac n2\right]$.

\begin{prop}[{\cite[Section~3.2]{JiS03}}]\label{prop:poles}
With notation as above, assume that $\tau$ is irreducible, unitarizable and supercuspidal. 
\begin{enumerate}
\item If~$\prod_{i=1}^t\gamma(s+z_i,\tau_i\times\tau,\psi)$ has a real pole
(respectively, zero) at~$s=s_0$, then~$\tau\simeq\tiltau_i$ and~$s_0=1-z_i$
(respectively,~$s_0=-z_i$), for some~$i\in\{1,\ldots, t\}$. 
\item For each~$j=1,\ldots, t$, the
product~$\prod_{i=1}^t\gamma(s+z_i,\tau_i\times\tiltau_j,\psi)$ has a real pole and zero. 
Moreover, if~$j=1$ then there is a zero at~$s=-z_1$, and if~$j=t$ then there is a
pole at~$s=1-z_t$.
\end{enumerate}
\end{prop}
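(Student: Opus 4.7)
The plan is to reduce everything to a factor-by-factor analysis of the real poles and zeros of each $\gamma(s+z_i,\tau_i\times\tau,\psi)$, using the defining equation~\eqref{gammadef}. Since the $\epsilon$-factor is a monomial in $q^{-s}$ by Theorem~\ref{FE}(iii), every real pole of $\gamma(s,\sigma\times\sigma',\psi)$ comes from a real pole of $L(1-s,\tilde\sigma\times\tilde\sigma')$ and every real zero from a real pole of $L(s,\sigma\times\sigma')$. For two irreducible supercuspidal representations of $\RG_n$ and $\RG_r$ respectively, the JPSS formula for the local $L$-factor gives $L(s,\sigma\times\sigma')=1$ unless $n=r$ and $\sigma'$ is an unramified twist of $\tilde\sigma$; in the latter case the poles lie on a vertical line in the $s$-plane. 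When both $\sigma$ and $\sigma'$ are unitarizable, any unramified character $\chi$ with $\sigma\otimes\chi\simeq\tilde\sigma'$ is forced to be unitary by the central character, so this vertical line is the imaginary axis and the only possible real pole is at $s=0$, occurring precisely when $\sigma'\simeq\tilde\sigma$.

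Applying this to each factor $\gamma(s+z_i,\tau_i\times\tau,\psi)$ (with $\tau$ irreducible unitarizable supercuspidal by hypothesis), I find that it has a real pole if and only if $\tau\simeq\tilde\tau_i$, in which case there is a simple real pole at $s=1-z_i$ and a simple real zero at $s=-z_i$. Since real poles and zeros of a product of meromorphic functions are contained in the union of those of the factors, part~(i) follows immediately.

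For part~(ii), set $\tau=\tilde\tau_j$ and let $I=\{i:\tau_i\simeq\tau_j\}$, which contains $j$ and is therefore nonempty. The preceding analysis lists all potential real poles of the product as $\{1-z_i:i\in I\}$ and all potential real zeros as $\{-z_i:i\in I\}$; cancellation can occur only when some $z_k=z_i-1$ with $i,k\in I$. Put $z^-=\min_{i\in I}z_i$ and $z^+=\max_{i\in I}z_i$. For any $i\in I$ one has $-z_i\le -z^-<1-z^-\le 1-z_i$, so the rightmost pole location $s=1-z^-$ cannot coincide with any zero location and hence is a genuine real pole of the product; symmetrically, the leftmost zero location $s=-z^+$ is a genuine real zero. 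For $j=1$ the global ordering $z_1\ge z_i$ combined with $1\in I$ forces $z^+=z_1$, placing the surviving zero at $s=-z_1$; for $j=t$ the inequality $z_t\le z_i$ combined with $t\in I$ forces $z^-=z_t$, placing the surviving pole at $s=1-z_t$.

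The only delicate point will be the transition from the $L$-factor pole structure (which a priori sits on a vertical line in $\BC$) to real poles of the gamma factor: it is precisely the unitarity of the $\tau_i$ and of $\tau$ that collapses this line to the imaginary axis, making $s=0$ the unique candidate for a real pole of an individual $L$-factor. Once this point is handled, the rest reduces to an elementary inspection of the finite real sequence $(z_1,\ldots,z_t)$ under the ordering $z_1\ge\cdots\ge z_t$.
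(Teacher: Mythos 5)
Your proof is correct and gives what appears to be the standard argument for this result, which the paper itself does not prove but rather defers to~\cite{JiS03}: each factor~$\gamma(s+z_i,\tau_i\times\tau,\psi)$ contributes a real pole only when~$\tau\simeq\tiltau_i$, namely a pole at~$s=1-z_i$ paired with a zero at~$s=-z_i$ (the unitarizability collapsing the vertical line of $L$-factor poles to the imaginary axis), and the min/max inspection of the~$z_i$ for~$i\in I$ shows that the extremal pole and zero of the product cannot be cancelled. One small slip: in the chain~$-z_i\le -z^-<1-z^-\le 1-z_i$ the final inequality is reversed (from~$z_i\ge z^-$ one gets~$1-z_i\le 1-z^-$), but the argument only needs the strict inequality~$-z_i<1-z^-$, which does hold, so the conclusion stands.
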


Note that the assumption in~\cite{JiS03}, that~$F$ is of
characteristic zero, is not used in the proof of this since it
requires only the multiplicativity of local gamma factors.

\begin{cor}\label{cor:multinv}
With notation as above, suppose also that~$\tau'_i$ are irreducible unitarizable
supercuspidal representation of~$\RG_{n'_i}$, for~$1\le i\le t'$,
with~$n=\sum_{i=1}^{t'} n'_i$, and that~$z'_1\ge\cdots\ge z'_{t'}$ are real numbers.
Suppose~$m\ge\left[\frac n2\right]$ and
\[
\prod_{i=1}^t\gamma(s+z_i,\tau_i\times\tau,\psi) =
\prod_{i=1}^{t'}\gamma(s+z'_i,\tau'_i\times\tau,\psi),
\]
for all irreducible unitarizable supercuspidal representations~$\tau$ of~$\RG_r$,
with~$r=1,2,\ldots,m$. Then~$t=t'$ and there is a permutation~$\sigma$
of~$\{1,\ldots t\}$ such that:
\begin{enumerate}
\item $n_i=n'_{\sigma(i)}$, for all~$i=1,\ldots,t$;
\item $\gamma(s+z_i,\tau_i\times\tau,\psi)=
\gamma(s+z'_{\sigma(i)},\tau'_{\sigma(i)}\times\tau,\psi)$,
for all irreducible unitarizable supercuspidal representations~$\tau$ of~$\RG_r$,
with~$r=1,2,\ldots,m$ and~$i=1,\ldots,t$;
\item $\tau_i\simeq\tau'_{\sigma(i)}$ and~$z_i=z'_{\sigma(i)}$, for all~$i$ such
that~$n_i\le\left[\frac n2\right]$.
\end{enumerate}
\end{cor}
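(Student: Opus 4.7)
The plan is to use Proposition~\ref{prop:poles}(i) to read off the multiset of Langlands data of rank at most~$m$ from the pole-and-zero positions of the twisted gamma factors, then finish with a combinatorial argument based on the constraint~$m \ge [\tfrac{n}{2}]$.

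Fix an irreducible unitarizable supercuspidal representation~$\tau$ of~$\RG_r$ with~$r \le m$, and set
\[
Z(\tau) = \{z_i : \tau_i \simeq \tiltau,\ 1 \le i \le t\}, \qquad Z'(\tau) = \{z'_i : \tau'_i \simeq \tiltau,\ 1 \le i \le t'\}
\]
as multisets. By Proposition~\ref{prop:poles}(i), factors with~$\tau_i \not\simeq \tiltau$ contribute no real poles or zeros to the product, while each factor~$\gamma(s+z_i, \tau_i \times \tau, \psi)$ with~$\tau_i \simeq \tiltau$ has a simple real pole at~$s = 1-z_i$ and a simple real zero at~$s = -z_i$, coming from the single real pole of~$L(s, \tiltau \times \tau)$ at~$s = 0$. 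Writing~$m_Z(w)$ for the multiplicity of~$w$ in a multiset~$Z$, the real order of the product at~$s_0$ equals~$m_{Z(\tau)}(-s_0) - m_{Z(\tau)}(1-s_0)$, and likewise for~$Z'(\tau)$. The hypothesized equality of the two products therefore yields, with~$w = -s_0$,
\[
m_{Z(\tau)}(w) - m_{Z(\tau)}(w+1) = m_{Z'(\tau)}(w) - m_{Z'(\tau)}(w+1) \quad \text{for all } w \in \BR.
\]
Since~$Z(\tau)$ and~$Z'(\tau)$ are finite, the difference~$m_{Z(\tau)} - m_{Z'(\tau)}$ is compactly supported; combined with invariance under translation by~$1$, this forces it to vanish identically, so~$Z(\tau) = Z'(\tau)$.

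Letting~$\tau$ range over all unitarizable supercuspidals of rank at most~$m$ yields the multiset equality
\[
\{(\tau_i, z_i) : n_i \le m\} = \{(\tau'_i, z'_i) : n'_i \le m\}.
\]
The hypothesis~$m \ge [\tfrac{n}{2}]$, combined with~$\sum_i n_i = n$, allows at most one index per side with~$n_i > m$; equating the total sums on each side then forces both sides to have the same number (zero or one) of such \emph{big} indices, with matching rank. In particular~$t = t'$. Define~$\sigma$ by matching small indices according to the multiset equality and, if present, sending the big index on side~1 to its counterpart on side~2. Part~(i) is immediate. Part~(iii), which concerns only~$n_i \le [\tfrac{n}{2}] \le m$, follows directly from the multiset matching. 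For part~(ii), the gamma factor equalities for small indices are automatic from~(iii); the one for the big index (if present) is obtained by dividing the original product identity by the matched small-index factors, each being a nonzero rational function of~$q^{-s}$.

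The only technical point is resolving possible cancellations between a pole of one factor and a zero of another, handled cleanly by the compact-support-plus-period-$1$ argument above; the remainder is combinatorial bookkeeping driven by the constraint~$m \ge [\tfrac{n}{2}]$.
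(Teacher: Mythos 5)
Your proof is correct, but it takes a genuinely different route from the paper's. The paper argues by induction on~$t$: since at most one~$n_i$ can exceed~$\left[\frac n2\right]$, one of~$n_1,n_t$ is small, and twisting by~$\tiltau_t$ (resp.~$\tiltau_1$) together with Proposition~\ref{prop:poles} forces a pole at~$s=1-z_t$ (resp.\ a zero at~$s=-z_1$); crucially, the ordering~$z_1\ge\cdots\ge z_t$ guarantees this pole (resp.\ zero) cannot be cancelled by the zero (resp.\ pole) of any other factor, so a matching~$(\tau'_j,z'_j)$ must appear on the other side, and both factors can be cancelled before applying the inductive hypothesis. Your argument is non-inductive: you read off the entire multiset~$Z(\tau)$ of shifts attached to each supercuspidal~$\tau$ of rank~$\le m$ in one pass from the real pole/zero data, and you handle the cancellation issue (a pole at~$1-z_i$ meeting a zero at~$-z_j$ when~$z_j=z_i-1$) not via the ordering but by the clean observation that a compactly supported integer-valued function on~$\BR$ which is invariant under translation by~$1$ must vanish. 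The price is that you need a slightly stronger input than the bare statement of Proposition~\ref{prop:poles}: the precise real order of each individual factor~$\gamma(s+z_i,\tau_i\times\tau,\psi)$, which you correctly justify from the explicit shape of the supercuspidal~$L$-factor for pairs of unitarizable supercuspidals of the same rank. In exchange you get the multiset equality~$\{(\tau_i,z_i):n_i\le m\}=\{(\tau'_i,z'_i):n'_i\le m\}$ directly, without the induction and without ever using the ordering of the~$z_i$. The finishing bookkeeping (at most one ``big'' index on each side, matching ranks, defining~$\sigma$, dividing out the small factors to get~(ii) for the big index) is essentially the same in both treatments. Both approaches are valid; yours is more global and somewhat more conceptual, while the paper's stays strictly within what Proposition~\ref{prop:poles} asserts.
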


\begin{proof} The proof is by induction on~$t$. If~$t=1$ but~$t'>1$
then~$n'_j\le\left[\frac n2\right]$, for some~$j$, and, by
Proposition~\ref{prop:poles}(i),~$\prod_{i=1}^{t'}\gamma(s+z'_i,\tau'_i\times\tiltau'_j,\psi)$
has a real pole while~$\gamma(s+z_1,\tau_1\times\tiltau'_j,\psi)$ does not, which is
absurd. Thus~$t'=1$ and there is nothing more to prove.

Now assume~$t\ge 2$ and note that either~$n_1$ or~$n_t$ is at most~$\left[\frac n2\right]$.
Suppose first that~$n_t\le\left[\frac n2\right]$.
Then~$\prod_{i=1}^t\gamma(s+z_i,\tau_i\times\tiltau_t,\psi)$ has a pole at~$1-z_t$ so,
by~Proposition~\ref{prop:poles}(i), there is an integer~$1\le j\le t'$ such
that~$\tau'_j\simeq\tau_t$ and~$z'_j=z_t$.
Hence~$\tau'_j|\cdot|^{z'_j}\simeq\tau_t|\cdot|^{z_t}$
and~$\gamma(s+z_t,\tau_t\times\tau,\psi) = \gamma(s+z'_j,\tau'_j\times\tau,\psi)$,
for all irreducible generic representations~$\tau$ of~$G_r$, for all~$r$. In particular,
we deduce
\[
\prod_{i=1}^{t-1}\gamma(s+z_i,\tau_i\times\tau,\psi) =
\prod_{i=1,i\ne j}^{t'}\gamma(s+z'_i,\tau'_i\times\tau,\psi),
\]
for all irreducible unitarizable supercuspidal representations~$\tau$ of~$\RG_r$,
with~$r=1,2,\ldots,m$. The result now follows from the inductive hypothesis.

Finally, if~$n_1\le\left[\frac n2\right]$ then~$\prod_{i=1}^t\gamma(s+z_i,\tau_i\times\tiltau_1,\psi)$
has a zero at~$-z_1$ so, by~Proposition~\ref{prop:poles}(i), there is an
integer~$1\le j\le t'$ such that~$\tau'_j\simeq\tau_1$ and~$z'_j=z_1$. The result then
follows as in the first case.
\end{proof}

Putting Corollary~\ref{cor:multinv} with~$m=\left[\frac n2\right]$ together with the
multiplicativity of local gamma factors~\eqref{eqn:mult} and the classification
of irreducible generic representations~\cite[Theorem~9.7]{Z80}, 
we see that Conjecture~\ref{conj3} implies Conjecture~\ref{conj1}.

\section{Special pairs and the local converse theorem}

\subsection{Preliminary results}\label{sec3.1}

We begin by recalling some useful lemmas, which form the technical
steps of the proof.

\begin{lem}[{\cite[Section~3.2]{JS85}}]\label{Jac}
Let~$t$ be a positive integer and let~$H$ be a complex smooth function
on~$\RG_t$  with compact support modulo~$\RU_t$ satisfying 
\[
H(ug)=\psi_t(u)H(g),
\]
for all~$u\in \RU_t, g\in\RG_t$.
If
\[
\int_{\RU_t\backslash\RG_t}H(g)W_\tau(g)dg=0,
\]
for all~$W_\tau\in\CW(\tau, \psi_t^{-1}),$
with~$\tau$ running through
all irreducible generic representations of~$\RG_t,$
then~$H\equiv 0.$
\end{lem}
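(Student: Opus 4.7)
The hypotheses on $H$ say exactly that $H$ is a vector in the compactly induced Whittaker representation $\cInd_{\RU_t}^{\RG_t}(\psi_t)$ (with $\RG_t$ acting by right translation), and the integral $\int_{\RU_t\bs\RG_t}H(g)W_\tau(g)\,dg$ is well-defined precisely because of the compact-support-modulo-$\RU_t$ condition. The conclusion is that the family of linear functionals $H\mapsto\int H\,W_\tau\,dg$, indexed by pairs $(\tau,W_\tau)$ with $\tau$ irreducible generic and $W_\tau\in\CW(\tau,\psi_t^{-1})$, is jointly injective on this induced space. Viewed this way, the statement is a Plancherel-type density assertion: the Whittaker models of generic representations collectively separate points in $\cInd_{\RU_t}^{\RG_t}(\psi_t)$.

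My approach is via generic principal series and Mellin inversion on the diagonal torus $\RA_t\cong (F^\times)^t$. Every irreducible generic representation of $\RG_t$ occurs as a subquotient of some principal series $\tau=\Ind_{\RB_t}^{\RG_t}(\chi_1\otimes\cdots\otimes\chi_t)$, and for the full induced representation the Whittaker function $W_\tau$ is presented by an explicit Jacquet integral applied to a flat section in the induced space. Substituting this into $\int H\,W_\tau\,dg$ and using the Iwasawa decomposition $\RG_t=\RU_t\RA_t K$ for a sufficiently small compact-open subgroup $K$ that fixes $H$ on the right, one interchanges orders of integration; the pairing then transforms into a Mellin transform in the parameter $(\chi_1,\ldots,\chi_t)$ of an auxiliary function $f_{H,k}$ on $\RA_t$ built piecewise from $H$ for each coset representative $k\in K$. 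The hypothesis now forces these Mellin transforms to vanish identically as the $\chi_i$ vary, so by Mellin inversion on the locally compact abelian group $\RA_t$ each $f_{H,k}\equiv 0$, whence $H\equiv 0$ on all of $\RG_t$.

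The main obstacle is the convergence of the Jacquet integrals, which are only absolutely convergent for characters $(\chi_1,\ldots,\chi_t)$ in a strict positive cone. I would handle this by shifting the parameters into this cone, carrying out the Mellin argument there, and then using meromorphic continuation together with the identity theorem on the unramified-twist parameter space to extend the vanishing to all $(\chi_1,\ldots,\chi_t)$ of interest. A secondary analytic point is to verify that $f_{H,k}$ has enough regularity (essentially a smooth compactly supported function on $\RA_t$) for the inversion formula to apply, which follows directly from the smoothness of $H$ and its compact support modulo $\RU_t$; the remaining manipulations are formal bookkeeping with the Jacquet integral and the Iwasawa decomposition.
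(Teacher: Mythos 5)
First, a point of comparison: the paper does not prove this lemma at all — it is quoted verbatim as a known result, cited to Jacquet--Shalika \cite[Section~3.2]{JS85} — so there is no ``paper's own proof'' against which to measure your argument. What follows is therefore an assessment of your sketch on its own terms, and the Mellin/Fourier-analytic strategy you adopt is indeed the natural one for such a completeness statement.

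That said, as written the sketch has real gaps. The most elementary is your use of the Iwasawa decomposition: you write $\RG_t=\RU_t\RA_t K$ ``for a sufficiently small compact-open subgroup $K$ that fixes $H$ on the right''. This decomposition holds only when $K$ is a \emph{maximal} compact subgroup such as $\GL_t(\Fo_F)$; it fails outright for small $K$. What you presumably want is $\RG_t=\RU_t\RA_t K_0$ with $K_0$ maximal, then a further coset decomposition of $K_0$ by a small $K$ fixing $H$, but you should say so. The more substantial gap is the central assertion that ``the pairing then transforms into a Mellin transform in the parameter $(\chi_1,\ldots,\chi_t)$ of an auxiliary function $f_{H,k}$ on $\RA_t$''. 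Writing $W_\tau$ as a Jacquet integral and interchanging orders does \emph{not} by itself produce a Mellin transform of a fixed function: for $a\in\RA_t$, $k\in K_0$, one finds $W_\tau(ak)$ has the form $(\chi\delta^{1/2})(a)\cdot m(a)\cdot\int_{\bar\RU_t}f(\bar u k)\,\bar\psi^{(a)}(\bar u)\,d\bar u$, and the twist $\bar\psi^{(a)}(\bar u)=\bar\psi(a\bar u a^{-1})$ genuinely depends on $a$, so the ``auxiliary function'' you would extract is not independent of $\chi$ and $a$ as your Mellin-inversion step requires. This can be repaired — for instance, by restricting to flat sections $f$ supported in the open Bruhat cell $\RB_t\bar\RU_t$, for which the Whittaker functional is an absolutely convergent integral over a \emph{compact} piece of $\bar\RU_t$ (eliminating both the $a$-dependent twist once the support is small enough, \emph{and} the entire convergence-cone/meromorphic-continuation detour in your final paragraph) — but none of this is in the sketch. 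Finally, having obtained $f_{H,k}\equiv 0$ you should explain why this recovers $H\equiv 0$: this step is only harmless if the construction of $f_{H,k}$ from $H$ involves no further integration over a non-compact group, which again hinges on exactly the structural points that the sketch elides.
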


From~\cite[Section 3.1]{Ch06}, we have the generalized Bruhat decomposition:
\[
\RG_n=\bigsqcup_{i=0}^{n-1}\RU_n\alpha^i\RQ_n,
\]
where~$\alpha=\begin{pmatrix}
0&\RI_{n-1}\\
1&0
\end{pmatrix}.$

\begin{defn}
Given two functions~$H_1$ and~$H_2$ on~$\RG_n,$ if
\[
H_1(x)=H_2(x),
\text{ for all } x\in \RU_n\alpha^i\RQ_n,
\]
then we say that~$H_1$ and~$H_2$ \emph{agree on height~$i$}.
\end{defn}

Assume that~$(W_{\pi_1},W_{\pi_2})$ is a special pair for~$(\pi_1,\pi_2)$, as in
Definition~\ref{sw}, so that~$W_{\pi_1}$ and~$W_{\pi_2}$ agree on
height~$i=0$. The condition on local gamma factors
in the statement Conjecture~\ref{conj3}, via 
Corollary~\ref{char0} and
the following proposition, implies the agreement 
of~$W_{\pi_1}$ and~$W_{\pi_2}$ on height $i,$ for $i=0, \ldots , \left[\frac{n}{2}\right]$.

\begin{prop}[{\cite[Proposition 3.1]{Ch06}}]\label{htr}
Fix an integer~$1\le r< n$.
Let~$\pi_1$ and~$\pi_2$ be irreducible supercuspidal representations
of~$\RG_n$ with the same central character, 
and let~$W_{\pi_1},W_{\pi_2}$ be Whittaker functions, for~$\pi_1,\pi_2$ 
respectively, which coincide on~$\RP_n$. If the 
local gamma factors~$\gamma(s,\pi_1\times\tau,\psi)$
and~$\gamma(s,\pi_2\times\tau,\psi)$ are equal as functions in the
complex variable~$s\in\BC$, for all irreducible generic representations~$\tau$ of~$\RG_r$,
then the two Whittaker functions~$W_{\pi_1}$,~$W_{\pi_2}$ agree on height~$r$.
\end{prop}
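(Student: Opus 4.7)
I would implement the Jacquet--Piatetski-Shapiro--Shalika strategy in four substeps: (i)~use agreement on $\RP_n$ to force coincidence of the Rankin--Selberg zeta integrals $\CZ(W_{\pi_i},W_\tau,s;j)$; (ii)~feed this through the functional equation~\eqref{gam} together with $\gamma(s,\pi_1\times\tau,\psi)=\gamma(s,\pi_2\times\tau,\psi)$ to get equality of the dual zeta integrals $\CZ(R_{w_{n,r}}\widetilde{W}_{\pi_i},\widetilde{W}_\tau,1-s;j')$; (iii)~apply Lemma~\ref{Jac} to convert integrated vanishing to pointwise vanishing; (iv)~unwind $\widetilde{W}(g)=W(w_n\,{}^tg^{-1})$ to read off equality of $W_{\pi_1}$ and $W_{\pi_2}$ on height~$r$.

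For (i)--(ii), observe that $\begin{pmatrix} g & 0 & 0\\ x & \RI_j & 0\\ 0 & 0 & \RI_{n-r-j}\end{pmatrix}$ has last row $(0,\ldots,0,1)$ and hence lies in $\RP_n$ for every $0\le j\le n-r-1$; combined with the hypothesis $W_{\pi_1}|_{\RP_n}=W_{\pi_2}|_{\RP_n}$ and the equality $\omega_{\pi_1}=\omega_{\pi_2}$ from Corollary~\ref{char0}, this gives $\CZ(W_{\pi_1},W_\tau,s;j)=\CZ(W_{\pi_2},W_\tau,s;j)$ for every generic $\tau$ of $\RG_r$. The functional equation then transfers this to equality of the dual zeta integrals for $j' = n-r-j-1 \in \{0,\ldots,n-r-1\}$. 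For (iii), take $j'=0$: the function $H(g) := R_{w_{n,r}}(\widetilde{W}_{\pi_1}-\widetilde{W}_{\pi_2})\begin{pmatrix} g & 0\\ 0 & \RI_{n-r}\end{pmatrix}$ satisfies $H(ug)=\psi_r^{-1}(u)H(g)$ for $u\in\RU_r$ and has compact support modulo $\RU_r$ (supercuspidality of the $\pi_i$, together with equal central characters). Lemma~\ref{Jac}, applied with base character $\psi_r^{-1}$ (pairing against $\widetilde{W}_\tau\in\CW(\tiltau,\psi_r)$ as $\tiltau$ ranges over generic representations of $\RG_r$), forces $H\equiv 0$. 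Unwinding, $W_{\pi_1}(B(g))=W_{\pi_2}(B(g))$ for every $g\in\RG_r$, where $B(g):=w_n w_{n,r}\begin{pmatrix} {}^tg^{-1} & 0\\ 0 & \RI_{n-r}\end{pmatrix}$; the computation $B(g)e_n=e_{n-r}=\alpha^r e_n$ places each $B(g)$ in $\alpha^r\RQ_n$.

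For (iv) I would propagate equality on the single-parameter family $\{B(g) : g\in\RG_r\}$ to the full double coset $\RU_n\alpha^r\RQ_n$ by exploiting the leftover identities for $j'=1,\ldots,n-r-1$: each introduces an extra integration over $\Mat_{j'\times r}(F)$, whose vanishing combined with the $\RU_n$-equivariance (both $W_{\pi_i}$ transforming by $\psi_n$), the $\RZ_n$-equivariance (equal central characters), and the Bruhat-type decomposition of Section~\ref{sec3.1} unravels to give pointwise equalities on successively larger subsets, culminating in the full height-$r$ coset. The main obstacle is precisely this last step: extracting pointwise equality on the entire height-$r$ stratum from integrated identities for the higher $j'$, which requires careful geometric book-keeping (essentially a Fourier-inversion-type argument in the unipotent directions cut out by the matrices $m(g,x)$ and a reduction via the Kirillov description of $\pi_i|_{\RP_n}$). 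Steps (i)--(iii) are essentially formal consequences of the functional equation, supercuspidality, and Lemma~\ref{Jac}.
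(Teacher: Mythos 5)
Your steps (i)--(iii) are correct. The matrices appearing in the zeta integrals $\CZ(W_{\pi_i},W_\tau,s;j)$ lie in $\RP_n$ for every $0\le j\le n-r-1$, so agreement on $\RP_n$ yields equality of all the zeta integrals; the functional equation together with the hypothesis on the gamma factors transfers this to the dual side; taking $j'=0$ and applying Lemma~\ref{Jac} gives pointwise equality of $R_{w_{n,r}}\widetilde{W}_{\pi_1}$ and $R_{w_{n,r}}\widetilde{W}_{\pi_2}$ on the $\RG_r$-slice, and unwinding $\widetilde{W}$ then gives agreement of $W_{\pi_1},W_{\pi_2}$ on the set $\alpha^r\left(\begin{smallmatrix}\RG_r&0\\0&\RI_{n-r}\end{smallmatrix}\right)$. (Your remark that compact support mod $\RU_r$ needs equal central characters is unnecessary: since $\left(\begin{smallmatrix}\RG_r&0\\0&\RI_{n-r}\end{smallmatrix}\right)\subset\RP_n$ and $\RG_r\cap\RZ_n=\{1\}$, supercuspidality alone gives compact support on this slice.)

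The genuine gap is step (iv). What you have after (iii) is agreement only on $\RU_n\,\alpha^r\left(\begin{smallmatrix}\RG_r&0\\0&\RI_{n-r}\end{smallmatrix}\right)$, a proper subset of the height-$r$ cell $\RU_n\alpha^r\RQ_n$, and the route you sketch --- extracting pointwise information from the higher-$j'$ functional equations by a ``Fourier-inversion-type argument'' --- does not close it. For $j'\ge1$ the $x$-integration genuinely loses information: Lemma~\ref{Jac} only tells you that the integral over $x\in\Mat_{j'\times r}(F)$ vanishes for each $g$, and neither the left $\psi_n$-equivariance under $\RU_n$ nor the $\RZ_n$-equivariance lets you undo that integration (those symmetries do not move the $x$-block). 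The decomposition of Section~\ref{sec3.1} is a decomposition of $\RG_n$, not of a single height-$r$ cell, and is irrelevant here.

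The correct and much simpler way to finish is the right-translation trick, which is Chen's key idea (and is exactly what the paper does later in Step~3 of the proof of Theorem~\ref{thm:sp}, applying the zeta-integral argument to $R_{p\alpha^ku}W_{\pi_i}$ rather than to $W_{\pi_i}$). Since $W_{\pi_1}$ and $W_{\pi_2}$ agree on $\RP_n$ and have the same central character, they agree on $\RQ_n=\RZ_n\RP_n$; hence, for any $q\in\RQ_n$, the Whittaker functions $R_qW_{\pi_1}$ and $R_qW_{\pi_2}$ again agree on $\RP_n$. Running your steps (i)--(iii) on the pair $(R_qW_{\pi_1},R_qW_{\pi_2})$ gives
\[
W_{\pi_1}\Bigl(\alpha^r\begin{pmatrix}h&0\\0&\RI_{n-r}\end{pmatrix}q\Bigr)
= W_{\pi_2}\Bigl(\alpha^r\begin{pmatrix}h&0\\0&\RI_{n-r}\end{pmatrix}q\Bigr),
\qquad h\in\RG_r,\ q\in\RQ_n,
\]
and since $\left(\begin{smallmatrix}\RG_r&0\\0&\RI_{n-r}\end{smallmatrix}\right)\subset\RQ_n$, this is agreement on $\alpha^r\RQ_n$; left $\RU_n$-equivariance then gives the whole of $\RU_n\alpha^r\RQ_n$. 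No use of $j'\ge1$ is needed.
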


We are going to use the functional equations together with the properties of
special pairs of Whittaker functions in order to show that if a special
pair~$(W_{\pi_1},W_{\pi_2})$ agree on height $i$, for $i=0, \ldots, \left[\frac n2\right]$, then they are in fact
equal. To do so, we apply a refined decomposition of~$\RG_n$, whose finite field
version was a key ingredient in the proof of the Jacquet Conjecture on the Local Converse Problem for~$\RG_n$
over finite fields in~\cite{N12}.

\begin{prop}[{\cite[Proposition~3.8]{N12}}]
The following (non-disjoint) decomposition holds:
\[
\RG_n= \bigcup_{0\le r\le \left[\frac{n}{2}\right], n-\left[\frac{n}{2}\right]\le k\le n}\RU_n\alpha^r\RQ_n\alpha^k\RU_n.
\]
\end{prop}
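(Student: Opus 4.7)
The plan is to show that every $g\in\RG_n$ admits a factorization $g = u_1\alpha^r q\alpha^k u_2$ with $0\le r\le[n/2]$ and $n-[n/2]\le k\le n$. The key reduction is the equivalence
\[
g\in\RU_n\alpha^r\RQ_n\alpha^k\RU_n
\ \Longleftrightarrow\
g u^{-1}\alpha^{-k}\in\RU_n\alpha^r\RQ_n\text{ for some }u\in\RU_n .
\]
Combined with the Bruhat characterization that $g'\in\RU_n\alpha^r\RQ_n$ precisely when the last row of $(g')^{-1}$ has its first nonzero entry at position $n-r$, the proposition reduces to the following: for each $g\in\RG_n$, find $k\in\{n-[n/2],\ldots,n\}$ and $u\in\RU_n$ so that the last row of $\alpha^k u g^{-1}$ has first nonzero entry at position $\ge n-[n/2]$.

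Writing $h = g^{-1}$ with rows $h_1,\ldots,h_n\in F^n$, the identity $e_n^T\alpha^k = e_k^T$ (with the convention $e_n$ for $k=n$) together with $e_k^T u = e_k^T + \sum_{j>k} c_j e_j^T$, where the $c_j\in F$ are free parameters arising from $u\in\RU_n$, shows that the last row of $\alpha^k u h$ equals $h_k + \sum_{j>k} c_j h_j$. Let
\[
W := \{v\in F^n\mid v_j = 0\text{ for }j < n-[n/2]\} ,
\]
a subspace of dimension $[n/2]+1$. The task becomes: find $k\in\{n-[n/2],\ldots,n\}$ with $h_k\in U_k := W + \mathrm{span}(h_{k+1},\ldots,h_n)$.

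This existence assertion is the main step, which I would prove by contradiction. Suppose $h_k\notin U_k$ for every $k\in\{n-[n/2],\ldots,n\}$. Descending from $k=n$ (with $U_n = W$), each assumption $h_j\notin U_j$ forces $\dim U_{j-1} = \dim U_j + 1$, so inductively $\dim U_k = ([n/2]+1) + (n-k)$. Taking $k = [n/2]+1$, which lies in $\{n-[n/2],\ldots,n\}$ for every $n\ge 2$ (since $2[n/2]+1 \ge n$), we obtain $\dim U_{[n/2]+1} = n$, whence $U_{[n/2]+1} = F^n$, and so $h_{[n/2]+1}\in F^n = U_{[n/2]+1}$, contradicting the hypothesis.

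The main obstacle, beyond establishing the Bruhat cell characterization in terms of the first nonzero entries of last rows of inverses, is the dimension-count step, which must be carried out carefully to cover both parities of $n$. Once the existence of a suitable $k$ is in hand, choosing the corresponding $c_j$'s produces an element $u\in\RU_n$ with $g u^{-1}\alpha^{-k}\in\RU_n\alpha^r\RQ_n$ for some $r\le[n/2]$, yielding the required factorization of $g$.
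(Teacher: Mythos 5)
The paper cites this proposition from~\cite[Proposition~3.8]{N12} without reproducing the proof, so there is no in-paper argument to compare against; your argument is correct, and to my knowledge it is the same Bruhat-cell/row-reduction strategy used over finite fields in~\cite{N12}. The cell characterization you invoke is valid: for $g'=u\alpha^r q$ with $u\in\RU_n$ and $q\in\RQ_n$, the last row of $(g')^{-1}$ is a nonzero scalar multiple of $e_n^T\alpha^{-r}u^{-1}=e_{n-r}^T u^{-1}$, whose first nonzero entry is in position $n-r$. The reduction to the existence of $k\in\{n-[n/2],\ldots,n\}$ with $h_k\in U_k$ is then correct (the rows $h_k,\ldots,h_n$ of $h=g^{-1}$ are linearly independent, so $h_k+\sum_{j>k}c_jh_j$ is automatically nonzero), and the dimension count is sound for both parities of~$n$: the chain $U_n\subsetneq U_{n-1}\subsetneq\cdots$ uses only the hypotheses for $j\in\{[n/2]+2,\ldots,n\}\subset\{n-[n/2],\ldots,n\}$, and one reaches $\dim U_{[n/2]+1}=n$ with $[n/2]+1\ge n-[n/2]$, giving the contradiction.
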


\subsection{Proof of Theorem~\ref{thm:sp}}

Let~$\pi_1$,~$\pi_2$ be irreducible supercuspidal representations of~$\RG_n$ and
let~$(W_{\pi_1},W_{\pi_2})$ be a special pair for~$(\pi_1,\pi_2)$. Let~$\bK$ be the
compact-mod-centre open subgroup of~$\RG_n$ such that~$W_{\pi_i}$ are~$\bK$-special.
By hypothesis, the local gamma factors~$\gamma(s,\pi_1\times\tau,\psi)$
and~$\gamma(s,\pi_2\times\tau,\psi)$ are equal as functions in the complex
variable~$s$, for all irreducible supercuspidal representations~$\tau$
of~$\RG_r$ with~$r=1,\ldots,\left[\frac{n}{2}\right]$. This condition can be extended for 
all irreducible generic smooth representations $\tau$ of $\RG_n$ by
the multiplicativity of local gamma factors. 
Moreover, by Corollary~\ref{char0},~$\pi_1,\pi_2$ have the same
central character.
The proof goes in three steps.

{\bf Step~(1).}\
By Proposition~\ref{htr},~$W_{\pi_1}(g)=W_{\pi_2}(g),$ for
\[
g\in \bigcup_{0\le r\le \left[\frac{n}{2}\right]}\RU_n\alpha^r\RQ_n =
\bigcup_{0\le r\le \left[\frac{n}{2}\right]}\RU_n\alpha^r\RQ_n\alpha^n\RU_n.
\]

{\bf Step~(2).}\
For~$g=q\alpha^ku\in\RQ_n\alpha^k\RU_n\cap \bK$, 
with~$n-\left[\frac n2\right]\le k\le n$, and~$i=1,2$, we have
\[
W_{\pi_i}(q\alpha^ku)
=
\overline{W_{\pi_i}((q\alpha^ku)^{-1})}
=
\overline{W_{\pi_i}(u^{-1}\alpha^{n-k}q^{-1})},
\]
since~$W_{\pi_i}$ is~$\bK$-special.
Since~$u^{-1}\alpha^{n-k}q^{-1}\in \RU_n\alpha^{n-k}\RQ_n$, from {\bf Step~(1)} it follows that
\[
W_{\pi_1}(q\alpha^ku)=W_{\pi_2}(q\alpha^ku).
\]
Thus~$W_{\pi_1}$,~$W_{\pi_2}$ agree on~$\RQ_n\alpha^k\RU_n\cap \bK$ and hence
on~$\RQ_n\alpha^k\RU_n\cap \RU_n\bK$, since they are both~$\psi_n$-Whittaker functions.
Since~$\Supp W_{\pi_i}\subset \RU_n\bK,$ we deduce that~$W_{\pi_1}(g)=W_{\pi_2}(g)$,
for all
\[
g\in \bigcup_{n-\left[\frac n2\right]\le k\le n}\RQ_n\alpha^k\RU_n
=\bigcup_{n-\left[\frac n2\right]\le k\le n}\RU_n\alpha^0\RQ_n\alpha^k\RU_n.
\]

{\bf Step~(3).}\
It remains to consider the case of~$g\in\RU_n\alpha^r\RQ_n\alpha^k\RU_n$,
with~$1\le r\le \left[\frac{n}{2}\right]$ and~$n-\left[\frac{n}{2}\right]\le k\le n-1$.
For any fixed~$u\in\RU_n$ and~$p\in \RP_n$, {\bf Step~(2)} implies that
\[
 R_{p\alpha^{k}u}W_{\pi_1}(q)  = R_{p\alpha^{k}u}W_{\pi_2}(q),
\]
for all~$q\in \RP_n$, where we recall that~$R_g$ denotes the right translation action
by~$g$ on the Whittaker functions. We apply the functional equation~\eqref{gam}
for~$j=n-r-1$ to the Whittaker functions~$R_{p\alpha^{k}u}W_{\pi_i}$ for~$i=1,2$ and
any Whittaker function~$W_\tau$ in~$\CW(\tau,\psi_r^{-1})$.

The local zeta function~$\CZ(R_{p\alpha^{k}u}W_{\pi_i},W_\tau,s;n-r-1)$ is given by
the following integral
\[
\int_{h}
\int_{x} R_{p\alpha^{k}u}W_{\pi_i}
 \begin{pmatrix}h&0&0\\
x&\RI_{n-r-1}&0\\
0&0&1\end{pmatrix} W_\tau(h)|\det h|^{ s-{ \frac{n-r}{2}}}dx dh
\]
where the integration in the variable~$h$ is over~$\RU_r\bks\RG_r$ and the integration
in the variable~$x$ is over~$\Mat_{(n-r-1)\times r}(F)$. Hence we obtain
\[
\CZ(R_{p\alpha^{k}u}W_{\pi_1},W_\tau,s;n-r-1)
=
\CZ(R_{p\alpha^{k}u}W_{\pi_2},W_\tau,s;n-r-1).
\]
Since~$\gamma(s,\pi_1\times\tau,\psi)=\gamma(s,\pi_2\times\tau,\psi)$, 
by the functional equation~\eqref{gam} for~$j=n-r-1$, we obtain
\[
\CZ(R_{w_{n,r}}\widetilde{R_{p\alpha^{k}u}W_{\pi_1}},\widetilde{W}_\tau,1-s;0)
=
\CZ(R_{w_{n,r}}\widetilde{R_{p\alpha^{k}u}W_{\pi_2}},\widetilde{W}_\tau,1-s;0).
\]
Thus, from the definition of these zeta integrals,
\[
\int_{g} \left(R_{w_{n,r}}\widetilde{R_{p\alpha^{k}u}W_{\pi_1}}-
R_{w_{n,r}}\widetilde{R_{p\alpha^{k}u}W_{\pi_2}}\right)
\begin{pmatrix}g&0\\
0&\RI_{n-r}\end{pmatrix}
|\det(g)|^{s-\frac{n-r}2} \widetilde{W}_\tau(g) dg = 0,
\]
for all generic representations~$\tau$ of~$\RG_r$, where the integration in
the variable~$h$ is over~$\RU_r\bks\RG_r$. From Lemma~\ref{Jac}, we deduce that
\[
R_{w_{n,r}}\widetilde{R_{p\alpha^{k}u}W_{\pi_1}}\begin{pmatrix}g&0\\0&\RI_{n-r}\end{pmatrix}
=R_{w_{n,r}}\widetilde{R_{p\alpha^{k}u}W_{\pi_2}}\begin{pmatrix}g&0\\0&\RI_{n-r}\end{pmatrix}
\]
for all~$p\in\RP_n$,~$u\in\RU_n$ and~$g\in\RG_r$. Now by definition, for~$i=1,2$,
\begin{eqnarray*}
R_{w_{n,r}}\widetilde{R_{p\alpha^{k}u}W_{\pi_i}}\begin{pmatrix}g&0\\0&\RI_{n-r}\end{pmatrix}
&=&
R_{p\alpha^{k}u}W_{\pi_i}\left(w_n\begin{pmatrix}{^tg^{-1}}&0\\0&I_{n-r}\end{pmatrix}{^tw_{n,r}^{-1}}\right)\\
&=&
W_{\pi_i}\left(\begin{pmatrix}0&\RI_{n-r}\\w_r{^tg^{-1}}&0\end{pmatrix}p\alpha^ku\right).
\end{eqnarray*}
Hence we obtain the identity
\[
W_{\pi_1}\left(\begin{pmatrix}0&\RI_{n-r}\\w_r{^tg^{-1}}&0\end{pmatrix}p\alpha^ku\right)
=
W_{\pi_2}\left(\begin{pmatrix}0&\RI_{n-r}\\w_r{^tg^{-1}}&0\end{pmatrix}p\alpha^ku\right),
\]
for all~$p\in\RP_n$,~$u\in\RU_n$ and~$g\in\RG_r$. In particular, taking~$g=w_r$ we obtain
\[
W_{\pi_1}(\alpha^rp\alpha^ku) = W_{\pi_2}(\alpha^rp\alpha^ku),
\]
for all~$p\in\RP_n$ and~$u\in\RU_n$. This proves that~$W_{\pi_1}(g)=W_{\pi_2}(g)$, for
\[
g\in\RU_n\alpha^r\RQ_n\alpha^k\RU_n
\]
with~$1\le r\le \left[\frac{n}{2}\right]$ and~$n-\left[\frac{n}{2}\right]\le k\le n-1$. This completes {\bf Step~(3)}.

By combining the results from all three {\bf Steps} above, we obtain that
\[
W_{\pi_1}(g)=W_{\pi_2}(g),\text{ for all } g\in\RG_n.
\]
By the uniqueness of local Whittaker models for irreducible smooth representations
of~$\RG_n$, the two Whittaker models~$\CW(\pi_1,\psi_n)$ and~$\CW(\pi_2,\psi_n)$ have
trivial intersection unless~$\pi_1$ and~$\pi_2$ are equivalent
as representations of~$\RG_n$, which completes the proof of Theorem~\ref{thm:sp}.

\section{Supercuspidals with the same endo-class}

$\bK$-special Whittaker functions are Whittaker functions
of~$\RG_n$ with certain symmetry when restricted to~$\bK$. The Bessel functions
of irreducible supercuspidal representations of~$\RG_n$ constructed by Pa{\v{s}}k{\=u}nas and the third-named author in~\cite{PS08} 
are such examples. We recall from~\cite{PS08} the
basics of these Bessel functions, which rely on the construction theory of supercuspidal representations 
of~$\RG_n$ in terms of maximal simple types of Bushnell and Kutzko
\cite{BK93}. We will use the standard notation from~\cite{BK93} and~\cite{PS08}.

\subsection{Bessel functions}

We begin by recalling from~\cite[Section 5]{PS08} the general formulation of
Bessel functions. Let~$\CK$ be an open compact-modulo-center subgroup of~$\RG_n$
and let~$\CU\subset\CM\subset\CK$ be compact open subgroups of~$\CK$.
Let~$\tau$ be an irreducible smooth representation of~$\CK$ and let~$\Psi$ be a
linear character of~$\CU$. Take an open normal subgroup~$\CN$ of~$\CK$, which is
contained in~$\Ker(\tau)\cap\CU$. 
Let~$\chi_\tau$ be the (trace) character of~$\tau$. The associated
\emph{Bessel function}~$\CJ:\CK\rightarrow\BC$ of~$\tau$ is defined by 
\[
\CJ(g):=[\CU:\CN]^{-1}\sum_{u\in\CU/\CN}\Psi(h^{-1})\chi_\tau(gu).
\]
This is independent of the choice of~$\CN$. The basic properties of this Bessel
function which we will need are given below.

\begin{prop}[{\cite[Proposition~5.3]{PS08}}]\label{bs30}
Assume that the data introduced above satisfy the following:
\begin{itemize}
\item $\tau|_\CM$ is an irreducible representation of~$\CM$; and
\item $\tau|_\CM\cong \Ind^\CM_\CU(\Psi)$.
\end{itemize}
Then the Bessel function~$\CJ$ of~$\tau$ enjoys the following properties:
\begin{enumerate}
\item $\CJ(1)=1$;
\item $\CJ(hg)=\CJ(gh)=\Psi(h)\CJ(g)$ for all~$h\in\CU$ and~$g\in\CK$;
\item if~$\CJ(g)\neq 0$, then~$g$ intertwines~$\Psi$; in particular, if~$m\in\CM$, then~$\CJ(m)\neq0$ if and only if~$m\in\CU$;
\end{enumerate}
\end{prop}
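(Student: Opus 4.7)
The plan is to prove the three properties in the order (ii), (i), (iii), deriving each by direct manipulation of the defining sum
\[
\CJ(g)=[\CU:\CN]^{-1}\sum_{u\in\CU/\CN}\Psi(u^{-1})\chi_\tau(gu),
\]
using only three ingredients: the cyclic invariance $\chi_\tau(xy)=\chi_\tau(yx)$ of characters, the Frobenius character formula for induced representations, and Mackey's intertwining criterion applied to the hypothesis that $\tau|_\CM\cong\Ind_\CU^\CM\Psi$ is irreducible. I would first observe that $\Psi$ is trivial on $\CN$, because $\CN\subset\Ker\tau$ and $\Ind_\CU^\CM\Psi$ contains the $\Psi$-isotypic line for $\CU$; this makes the sum well defined on $\CU/\CN$.

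For (ii): to compute $\CJ(hg)$ with $h\in\CU$ I would apply the cyclic identity $\chi_\tau(hgu)=\chi_\tau(guh)$ and then substitute $v=uh$, so that $\Psi(u^{-1})=\Psi(h)\Psi(v^{-1})$ factors out and leaves a copy of $\CJ(g)$; the formula for $\CJ(gh)$ is analogous with $v=hu$.

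For (i): I would expand the character of the induced representation via $\chi_\tau(u)=\sum_{x}\Psi(x^{-1}ux)$, where $x$ runs over those representatives in $\CU\backslash\CM$ with $x^{-1}ux\in\CU$, substitute into $\CJ(1)$, and swap the order of summation. The contribution of a fixed $x$ is then a character sum
\[
\sum_{u\in(\CU\cap x\CU x^{-1})/\CN}\Psi(u^{-1})\Psi(x^{-1}ux),
\]
i.e.\ the sum over a finite abelian group of values of the character $h\mapsto\Psi(x^{-1}hx)\Psi(h)^{-1}$ on $\CU\cap x\CU x^{-1}$. By Mackey, irreducibility of $\Ind_\CU^\CM\Psi$ forces this character to be non-trivial for every $x\notin\CU$, so orthogonality kills all but the $x=1$ term, which contributes $|\CU/\CN|$ and cancels the normalization. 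For (iii), given $h\in\CU\cap g^{-1}\CU g$, property (ii) yields
\[
\Psi(ghg^{-1})\CJ(g)=\CJ((ghg^{-1})g)=\CJ(gh)=\Psi(h)\CJ(g),
\]
so $\CJ(g)\ne0$ forces $\Psi(ghg^{-1})=\Psi(h)$, which is exactly the statement that $g$ intertwines $\Psi$. For the specialization to $m\in\CM$, the same Mackey analysis used in (i) shows that any intertwiner of $\Psi$ inside $\CM$ must already lie in $\CU$, while conversely for $m\in\CU$ the combination of (ii) and (i) gives $\CJ(m)=\Psi(m)\CJ(1)=\Psi(m)\ne0$.

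The main obstacle is the vanishing step in (i): one has to verify, for each $x\notin\CU$, that the character $h\mapsto\Psi(x^{-1}hx)\Psi(h)^{-1}$ on $\CU\cap x\CU x^{-1}$ is still non-trivial after passing to the quotient by $\CN$, so that its sum over $(\CU\cap x\CU x^{-1})/\CN$ really is zero. This is the one point where all the hypotheses of the proposition — the inclusion $\CN\subset\Ker\tau\cap\CU$, the irreducibility of $\tau|_\CM$, and the identification $\tau|_\CM\cong\Ind_\CU^\CM\Psi$ — must be used simultaneously, via Mackey's criterion, to guarantee the orthogonality collapse. The other two properties are essentially formal once (i) is in hand.
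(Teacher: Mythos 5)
The paper does not give its own proof of this proposition --- it is imported wholesale from \cite[Proposition~5.3]{PS08} --- so you are supplying an argument from scratch, and your argument is correct. Your route is a direct character-sum computation: (ii) by cyclicity of the trace and the substitution $v=uh$ (resp.\ $v=hu$); (i) by expanding $\chi_\tau|_\CU$ via the Frobenius induced-character formula and killing the off-diagonal cosets via Mackey's irreducibility criterion; (iii) formally from (ii), together with a further use of Mackey for the ``$m\in\CM$'' refinement. The ``main obstacle'' you flag for (i) in fact resolves in one line and is not really an obstacle: the linear character $u\mapsto\Psi(x^{-1}ux)\Psi(u)^{-1}$ on $\CU\cap x\CU x^{-1}$ is trivial on $\CN$ (since $\CN$ is normal in $\CK$, so $x^{-1}\CN x=\CN$, and $\Psi|_\CN\equiv 1$ because $\CN\subset\Ker\tau$ and $\Psi$ occurs in $\tau|_\CU$), hence it descends to the finite quotient; and a nontrivial linear character of \emph{any} finite group sums to zero, so Mackey non-triviality upstairs passes automatically to non-vanishing cancellation downstairs. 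A slicker and more structural alternative, closer in spirit to the source, is to observe that
\[
P:=[\CU:\CN]^{-1}\sum_{u\in\CU/\CN}\Psi(u^{-1})\tau(u)
\]
is the projection onto the $\Psi$-isotypic subspace of $\tau|_\CU$, which is one-dimensional precisely because $\tau|_\CM\cong\Ind_\CU^\CM\Psi$ is irreducible (Mackey, or Frobenius reciprocity plus Mackey restriction). Then $\CJ(g)=\tr(\tau(g)P)=\langle\tau(g)v,v\rangle$ for a unit $\Psi$-eigenvector $v$, from which (i) and (ii) are immediate, and (iii) follows from the relation $P\tau(g)P=\CJ(g)P$ combined with the Mackey characterization of intertwining. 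Your approach carries out the same computation with the projection ``unrolled'' into a double sum; the projection viewpoint is shorter and makes the one-dimensionality --- the actual content of the hypotheses --- the visible engine of all three properties.
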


When the representation~$\tau$ is also unitarizable, the Bessel function enjoys
another symmetry property, as in the finite field case in~\cite{N12}.

\begin{lem}\label{inv}
In the situation of Proposition~\ref{bs30}, assume further that~$\tau$
is unitarizable. Then
\[
\CJ(g)=\overline{\CJ(g^{-1})}, \quad\text{ for }g\in\CK.
\]
\end{lem}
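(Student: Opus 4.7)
The plan is to start from the definition of~$\CJ(g^{-1})$ and manipulate it using two unitarity properties: that of the representation~$\tau$ and that of the character~$\Psi$. Since~$\tau$ is unitarizable, its trace character satisfies~$\overline{\chi_\tau(h)}=\chi_\tau(h^{-1})$ (because~$\tau(h)$ acts as a unitary operator, so its eigenvalues lie on the unit circle). Since~$\Psi$ is a linear character of the compact open subgroup~$\CU/\CN$, its image consists of roots of unity, so~$\overline{\Psi(u)}=\Psi(u^{-1})$.

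Starting from the definition, I would write
\[
\overline{\CJ(g^{-1})} = [\CU:\CN]^{-1}\sum_{u\in\CU/\CN}\overline{\Psi(u^{-1})}\,\overline{\chi_\tau(g^{-1}u)}.
\]
Applying the two unitarity identities above, the right-hand side becomes
\[
[\CU:\CN]^{-1}\sum_{u\in\CU/\CN}\Psi(u)\,\chi_\tau(u^{-1}g).
\]
Now I would perform the change of variables~$u\mapsto u^{-1}$ on the finite quotient~$\CU/\CN$ (which is legitimate because~$\CN$ is normal in~$\CU$, hence the involution~$u\mapsto u^{-1}$ permutes the cosets), obtaining
\[
[\CU:\CN]^{-1}\sum_{u\in\CU/\CN}\Psi(u^{-1})\,\chi_\tau(ug).
\]
Finally, since~$\chi_\tau$ is a class function,~$\chi_\tau(ug)=\chi_\tau(gu)$, which returns exactly the defining expression for~$\CJ(g)$.

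None of the steps is really a serious obstacle; the proof is essentially an algebraic identity once one has the right symmetries in hand. The only point that requires a moment of care is justifying~$\overline{\Psi(u^{-1})}=\Psi(u)$ and~$\overline{\chi_\tau(h)}=\chi_\tau(h^{-1})$, both of which follow immediately from unitarity, together with the observation that~$u\mapsto u^{-1}$ descends to a well-defined bijection of~$\CU/\CN$. I expect no delicate issue, and the proof should be a short paragraph.
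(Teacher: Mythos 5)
Your proof is correct and is essentially the same as the paper's: both use unitarity of~$\Psi$ and of~$\chi_\tau$, the class-function property of the trace, and the substitution~$u\mapsto u^{-1}$ on~$\CU/\CN$. The only difference is that you perform the change of variables before invoking the class-function identity, whereas the paper does it in the opposite order; this is immaterial.
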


\begin{proof}
Note that~$\Psi$ is unitary, since it is a character of the compact group~$\CU.$ That
is~$\overline{\Psi(g^{-1})}=\Psi(g).$ Since~$\chi_{\tau}$ is also unitary
and~$\chi_{\tau}(gh)=\chi_{\tau}(hg)$, for~$g,h \in \CK$, we get
\begin{eqnarray*}
\overline{\CJ (g^{-1})}&:=&
[\CU:\CN]^{-1}\overline{\sum _{u\in\CU/  \CN}\Psi(u ^{-1})\chi_{\tau}(g^{-1}u )} \\
&=&[\CU:\CN]^{-1}\sum_{u \in\CU/  \CN}\Psi(u )\chi_{\tau}( u^{-1}g )\\
&=&[\CU:\CN]^{-1}\sum_{u \in\CU/  \CN}\Psi(u )\chi_{\tau}(gu^{-1})\\
&=&[\CU:\CN]^{-1}\sum_{u \in\CU/  \CN}\Psi(u^{-1}  )\chi_{\tau}( gu )\\
&=& \CJ (g),\text{ for }g\in \CK.
\end{eqnarray*}
The penultimate equality follows from the substitution~$u \mapsto u^{-1}$
and the normality of~$\CN$ in~$\CU.$
\end{proof}

\subsection{Maximal simple types}

Following~\cite[Section 6]{BK93}, the irreducible supercuspidal representations
of~$\RG_n$ are classified by means of maximal simple types~$(J,\lambda)$, where~$J$
is a compact open subgroup of~$\RG_n$ and~$\lambda$ is an irreducible representation
of~$J$. More precisely,~$(J,\lambda)$ is introduced as follows. We refer to~\cite{BK93}
for precise definitions of the objects introduced here.

Let~$V=F^n$, an~$n$-dimensional vector space over~$F$ with standard basis.
Thus we identify~$\Aut_F(V)$ with~$\RG_n$ and~$A=\End_F(V)$ with~$\Mat_{n\times n}(F)$.
Let~$\FA$ be a principal hereditary~$\Fo_F$-order in~$A$ with
Jacobson radical~$\FP$. Define~$\bU^0(\FA)=\bU(\FA)=\FA^\times$ and for~$m\ge 1$,
define~$\bU^m(\FA)=1+\FP^m$. For~$m\ge 0$, choose~$\beta\in A$ such
that~$\beta\in\FP^{-m}\setminus\FP^{1-m}$,~$E=F[\beta]$ is a field extension of~$F$,
and~$E^\times$ normalizes~$\FA$. Provided an additional technical condition is satisfied
(namely~$k_F(\beta)<0$), these data give a principal simple stratum~$[\FA,m,0,\beta]$
of~$A$. Take~$J=J(\beta,\FA)$,~$J^1=J^1(\beta,\FA)$, and~$H^1=H^1(\beta,\FA)$ as defined
in~\cite[Section~3]{BK93}. Denote by~$\CC(\FA,\beta,\psi)$ the set of simple (linear)
characters of~$H^1$ as defined in~\cite[Section~3]{BK93}.

Recall from~\cite[Section 6]{BK93} the following definition of maximal simple types.

\begin{defn}\label{mst}
The pair~$(J,\lambda)$ is called a \emph{maximal simple type} if one of the following holds:
\begin{itemize}
\item[(a)] $J=J(\beta,\FA)$ is an open compact subgroup associated to a simple
stratum~$[\FA,m,0,\beta]$ of~$A$ as above, such that, if~$E=F[\beta]$ and~$B=\End_E(V)$,
then~$\FB=\FA\cap B$ is a maximal~$\Fo_E$-order in~$B$. Moreover, there exists a
simple character~$\theta\in\CC(\FA,\beta,\psi)$ such that
\[
\lambda\cong\kappa\otimes\sigma,
\]
where~$\kappa$ is a~$\beta$-extension of the unique irreducible
representation~$\eta$ of~$J^1=J^1(\beta,\FA)$, which contains~$\theta$,
and~$\sigma$ is the inflation to~$J$ of an irreducible cuspidal representation of
\[
J/J^1\cong\bU(\FB)/\bU^1(\FB)\cong\RG\RL_r(\Fk_E),
\]
where~$r=n/[E:F]$.
\item[(b)] $(J,\lambda)=(\bU(\FA),\sigma)$, where~$\FA$ is a maximal
hereditary~$\Fo_F$-order in~$A$ and~$\sigma$ is the inflation to~$\bU(\FA)$ of an
irreducible cuspidal representation of
\[
\bU(\FA)/\bU^1(\FA)\cong\RG\RL_n(\Fk_F).
\]
\end{itemize}
\end{defn}
We will regard case~(b) formally as a special case of case~(a) by setting~$\beta=0$
and~$E=F$, and~$\theta,\eta,\kappa$ trivial.
In either case, we put~$\bJ=E^\times J$.
With these data, any irreducible supercuspidal representation~$\pi$ of~$\RG_n$ is of the form
\[
\pi\cong \cInd^{\RG_n}_{\bJ}(\Lambda),
\]
for some choice of~$(\bJ,\Lambda)$, where~$\Lambda|_J=\lambda$. We call such a
pair~$(\bJ,\Lambda)$ and \emph{extended maximal simple type}.

For~$\pi$ an irreducible supercuspidal representation of~$\RG_n$, any two extended
maximal simple types in~$\pi$ are conjugate in~$\RG_n$. This fact allows one to
associate some invariants to~$\pi$. The simple character~$\theta$ in the construction
of an extended maximal simple type for~$\pi$ determines an
\emph{endo-class}~$\Theta=\Theta(\pi)$ as defined in~\cite{BH96}. We do not recall
precisely the definition of endo-class: it is a class for a certain equivalence relation
on functions which take values in simple characters. For~$i=1,2$,
let~$\theta_i\in\CC(\FA_i,\beta_i,\psi)$ be simple characters for~$\RG_n$.
If~$\theta_1,\theta_2$ have the same endo-class then they intertwine in~$\RG_n$;
if, moreover, the hereditary orders~$\FA_1,\FA_2$ are isomorphic then~$\theta_1,\theta_2$
are conjugate in~$\RG_n$.

Although the field extension~$E/F$ involved in the construction of a maximal simple type
in~$\pi$ is not uniquely determined, its residue degree and ramification index are in fact
invariants of the endo-class~$\Theta=\Theta(\pi)$ and we write
\[
f(\Theta)=f(E/F),~e(\Theta)=e(E/F),~\deg(\Theta)=[E:F].
\]
These are then also invariants of~$\pi$ so we write~$\deg(\pi)=\deg(\Theta)$ and call
it the \emph{degree} of~$\pi$. We also remark that the~$\Fo_F$-period of the hereditary
order~$\FA$ in the construction of any maximal simple type in~$\pi$ is~$e(\Theta)$.

\subsection{Explicit Whittaker functions}

Let~$\pi$ be an irreducible \emph{unitarizable} supercuspidal representation of~$\RG_n$.
By~\cite[Proposition 1.6]{BH98}, there is an extended maximal simple
type~$(\bJ,\Lambda)$ in~$\pi$ such that
\[
\Hom_{\RU_n\cap\bJ}(\psi_n,\Lambda)\neq0.
\]
Since~$\Lambda$ restricts to a multiple of some simple
character~$\theta\in\CC(\FA,\beta,\psi)$, one obtains that~$\theta(u)=\psi_n(u)$ for
all~$u\in\RU_n\cap H^1$. As in~\cite[Definition 4.2]{PS08}, one defines a
character~$\Psi_n:(J\cap\RU_n)H^1\rightarrow\BC^\times$ by
\begin{equation}\label{Psin}
\Psi_n(uh):=\psi_n(u)\theta(h),
\end{equation}
for all~$u\in J\cap\RU_n$ and~$h\in H^1$. By~\cite[Theorem 4.4]{PS08}, the data
\[
\CK=\bJ,~\tau=\Lambda,~\CM=(J\cap\RP_n)J^1,~\CU=(J\cap\RU_n)H^1,~\text{and}~\Psi=\Psi_n
\]
satisfy the conditions in Proposition~\ref{bs30} and hence define a Bessel function~$\CJ$.

Now we define a function~$W_\pi:\RG_n\to\BC$ by
\begin{equation}\label{eqn:Wdef}
W_\pi(g):=\begin{cases}
\psi_n(u)\CJ(j)&\text{ if }g=uj\text{ with }u\in\RU_n,~j\in\bJ,\\
0&\text{ otherwise},
\end{cases}
\end{equation}
which is well-defined by Proposition~\ref{bs30}(ii).
Then, by~\cite[Theorem 5.8]{PS08},~$W_\pi$ is a Whittaker function for~$\pi$.
Moreover, since~$\pi$ is unitarizable, the same is true of~$\Lambda$, so~$W_\pi$ is
a~$\bJ$-special Whittaker function for~$\pi$, by Lemma~\ref{inv}. By
Proposition~\ref{bs30}, the restriction of~$W_\pi$ to~$\RP_n$ has a particularly
simple description: for~$g\in\RP_n$,
\begin{equation}\label{eqn:WsponPn}
W_\pi(g)=\begin{cases}
\Psi_n(g)&\text{ if }g\in(J\cap\RU_n)H^1;\\
0&\text{ otherwise}.
\end{cases}
\end{equation}

\subsection{Proof of Proposition~\ref{prop:endo-equivalent}}

Let~$\pi_1$,~$\pi_2$ be irreducible unitarizable supercuspidal representations
of~$\RG_n$ with the same endo-class~$\Theta$. We will use all
the notation of Definition~\ref{mst} but with subscripts~${}_1,{}_2$.

Let~$(\bJ_1,\Lambda_1)$ be an extended maximal
simple type in~$\pi_1$ such that
\[
\Hom_{\RU_n\cap\bJ_1}(\psi_n,\Lambda_1)\neq 0.
\]
By~\cite[Remark~4.15]{PS08}, we may assume that the
pair~$(\RU_n,\psi_n)$ arises from the construction
of~\cite[Theorem~3.3]{PS08}. 
This construction, which produces a particular maximal
  unipotent subgroup and non-degenerate character, 
depends only on the simple character~$\theta_1$. Thus,
by~\cite[Corollary~4.13]{PS08}, the
space~$\Hom_{\RU_n\cap\bJ_1}(\psi_n,\Lambda)$ is non-zero for
\emph{any} extended maximal simple type~$(\bJ_1,\Lambda)$
containing~$\theta_1$.

Now let~$(\bJ_2,\Lambda_2)$ be any extended maximal simple type in~$\pi_2$. The hereditary
orders~$\FA_i$ have the same period~$e(\Theta)$ so are conjugate in~$\RG_n$;
replacing~$\FA_2$ by a conjugate if necessary, we assume they are equal.
Then the simple characters~$\theta_1,\theta_2$ are conjugate in~$\RG_n$, by definition
of endo-equivalence; again, replacing~$\theta_2$ by a conjugate if necessary, we assume
they are equal. Now~$\bJ_i$ is the~$\RG_n$-normalizer of~$\theta_i$ so we have~$\bJ_1=\bJ_2$.
Hence, by the remarks above,
\[
\Hom_{\RU_n\cap\bJ_1}(\psi_n,\Lambda_2)\neq 0.
\]
Thus the characters~$\Psi_n^1$,~$\Psi_n^2$ as defined in~\eqref{Psin} are equal.
Finally, by~\eqref{eqn:WsponPn}, the~$\bJ_1$-special Whittaker
functions~$W_{\pi_1}$,~$W_{\pi_2}$ defined by~\eqref{eqn:Wdef} agree on~$\RP_n$.
Thus~$(W_{\pi_1},W_{\pi_2})$ is a special pair for~$(\pi_1,\pi_2)$, which
completes the proof of Proposition~\ref{prop:endo-equivalent}.

\begin{rmk}\label{rmk:weak}
In the proof of the existence of a special pair, we do not in fact use that the
endo-classes for~$\pi_1,\pi_2$ coincide, but only that~$\bJ_1,\bJ_2$ are
contained in a common compact-modulo-center open subgroup of~$\RG$,
that~$H^1_1\cap\RP_n=H^1_2\cap\RP_n$ and that~$\theta_1,\theta_2$
coincide on~$H_1^1\cap\RP_n$. This is significantly weaker: for
example, if~$\deg(\pi_1)=n$ and~$\beta_1$ is a \emph{minimal} element
(see, for example,~\cite[Section~1.4]{BK93} for
the definition) then~$H_1^1\cap\RP_n=U^{\left[\frac m2\right]+1}(\FA_1)\cap\RP_n$.
\end{rmk}

\section{Conductors of pairs}

In this section we will prove Corollary~\ref{cor:nonmax}. The techniques here are
entirely different, relying on the explicit computation of conductors of pairs
of supercuspidal representations from~\cite{BHK98} and their application in~\cite{BH03}.

\subsection{Endo-classes}

In~\cite{BH03}, Bushnell and Henniart define a function~$\FF$ of pairs of endo-classes
with the property that, for~$\pi,\tau$ irreducible supercuspidal representations
of~$\RG_n,\RG_r$ respectively, with~$n>r\ge 1$, the conductor satisfies
\begin{equation}\label{eqn:fFF}
f(\pi\times\tiltau,\psi)=nr(\FF(\Theta(\pi),\Theta(\tau))+1).
\end{equation}
Moreover,~\cite[Theorem~C]{BH03} says that this function characterizes endo-classes
in the following way: for endo-classes~$\Theta_1,\Theta_2$,
\begin{equation}\label{eqn:FFchar}
\FF(\Theta_1,\Theta_2) \ge \FF(\Theta_1,\Theta_1),
\end{equation}
with equality if and only if~$\Theta_1=\Theta_2$.

The final ingredient we need is that, given an endo-class~$\Theta$, there is an
irreducible supercuspidal representation~$\tau$ of~$\RG_r$ with endo-class~$\Theta$
whenever~$r$ is a multiple of the degree~$\deg(\Theta)$. This is immediate from the
definitions of endo-class in~\cite{BH98} and of maximal simple types.

\subsection{Proof of Corollary~\ref{cor:nonmax}}

Let~$\pi_1,~\pi_2$ be unitarizable irreducible supercuspidal representations
of~$\RG_n$, with endo-classes~$\Theta_1,~\Theta_2$ respectively, and suppose
that~$\deg(\pi_1)<n$. Suppose the local gamma 
factors~$\gamma(s,\pi_1\times\tau,\psi)$ and~$\gamma(s,\pi_2\times\tau,\psi)$ are
equal as functions in the complex variable~$s$, for all irreducible supercuspidal
representations~$\tau$ of~$\RG_r$ with~$r=1,\ldots,\left[\frac{n}{2}\right]$.

Put~$r:=\deg(\pi_1)$, which is a proper divisor of~$n$; in particular,~$r\le\left[\frac n2\right]$.
Let~$\tau$ be an irreducible supercuspidal representation~$\tau$ of~$\RG_r$ with
endo-class~$\Theta(\tau)=\Theta_1$. Then, by~\eqref{eqn:fFF} and hypothesis, we have
\[
\FF(\Theta_1,\Theta_1) = \frac{f(\pi_1\times\tiltau,\psi)}{nr}-1
= \frac{f(\pi_2\times\tiltau,\psi)}{nr}-1 = \FF(\Theta_2,\Theta_1).
\]
We deduce, from~\eqref{eqn:FFchar}, that~$\Theta_1=\Theta_2$. Then
Proposition~\ref{prop:endo-equivalent} and Theorem~\ref{thm:sp} combine to
imply that~$\pi_1$ is equivalent to~$\pi_2$.

\begin{rmk}\label{rmk:end}
The restriction of a simple character~$\theta\in\CC(\FA,\beta,\psi)$ to the
groups~$H^{t+1}=H^1\cap\bU^{t+1}(\FA)$,~$t\ge 0$, determines a family
of endo-classes. By considering these endo-classes, rather than just
those coming from the simple character~$\theta$, it seems likely that
one could prove a more general version of Corollary~\ref{cor:nonmax}
by generalizing the function~$\FF$.

However, even in the most optimistic scenario, this will leave the
case where, for~$i=1,2$, we have~$\deg(\pi_i)=n$ and any simple
character~$\theta_i\in\CC(\FA,\beta_i,\psi)$ in~$\pi_i$ has~$\beta_i$
a minimal element. One would need to prove that the equality of
local gamma factors implies that one can
assume~$\beta_1\equiv\beta_2\pmod{\FP^{-\left[\frac m2\right]}}$ to enable us to
construct a special pair of Whittaker functions (see Remark~\ref{rmk:weak}).
However, even the case~$n=3$ seems to be very difficult to analyze directly
via the explicit construction of supercuspidal representations, even
in the tame case.
\end{rmk}

\bigskip
\footnotesize
\noindent\textit{Acknowledgments.}
The work of the first-named author is supported in part by NSF grants
DMS--1001672 and DMS--1301567. Both the first and the second-named
authors would like
to thank the Chinese Academy of Sciences for invitation and support
over the years when their research work related to this paper was in
process. The second-named author is supported by NSC101-2918-I-006-003
and would like to thank the University of East Anglia for providing a
stimulating environment during the execution period of the project.
The third-named author is supported by EPSRC grant EP/H00534X/1.
We thank the referee for some helpful comments, in particular for
encouraging us to remove an unnecessary hypothesis on the
characteristic.


\end{document}